\newtheorem*{theorem*}{Theorem}
\newtheorem*{proposition*}{Proposition}
\newtheorem*{lemma*}{Lemma}
\newtheorem*{corollary*}{Corollary}
\newtheorem*{remark*}{Remark}
\newtheorem{theorem}{Theorem}[section]
\newtheorem{lemma}[theorem]{Lemma}
\newtheorem{corollary}[theorem]{Corollary}
\newtheorem{proposition}[theorem]{Proposition}
\theoremstyle{definition}
\newtheorem{remark}[theorem]{Remark}
\newcommand{\Spec}{\textrm{Spec}}
\newcommand\vect[2]{#1_1,\,\ldots,\, #1_{#2}}
\def\vect#1#2{{#1}_1, \, \ldots, \, {#1}_{#2}}
\numberwithin{equation}{section}
\date{}
\title{On the integrality of étale extensions of polynomial rings}
\author{Lázaro Orlando Rodríguez Díaz}
\address{Instituto de Matem\'{a}tica, 
	Universidade Federal do Rio de Janeiro, RJ, Brazil.}
\email{lazarord@im.ufrj.br}	
\begin{document}
	
\begin{abstract}
Motivated by a valuation theorem, recently obtained by Rangachev, we study the \'etale extensions $A\subset B$ of polynomial rings over an algebraically closed field of characteristic zero, such that the integral closure $\overline{A}$ is a primary $\overline{A}$-submodule of $B$. We prove that in this case $\overline{A}$ has infinite cyclic divisor class group, where the generator is a prime divisor equal to the complement of $\textrm{Spec}(B)$ in $\textrm{Spec}(\overline{A})$. Moreover, this prime divisor coincides with the ramification divisor of the finite extension $A\subset \overline{A}$. In this situation we carry out Wright's geometric approach for two-dimensional non-integral \'etale extensions. It follows from the work of Miyanishi that $\textrm{Spec}(\overline{A})$ is a smooth affine surface. We show that $\textrm{Spec}(\overline{A})$ is an $\mathbb{A}^{1}$-bundle over $\mathbb{P}^{1}$, more precisely a Danilov-Gizatullin surface of index three. Based on Wright's analysis of which of these affine surfaces can factorize an \'etale morphism of the complex affine plane and his description of its affine coordinate rings, we prove that under the strong assumption that $\overline{A}$ is always a primary $\overline{A}$-submodule of $B$, any two-dimensional complex \'etale extension is integral.
\end{abstract}
\maketitle

\section{Introduction}
The problem we want to address is the following: how far is the integral closure $\overline{A}$ of $A$ from $B$, assuming that an $A=k[\vect y n]\subset B=k[\vect x n]$ is an \'{e}tale extension of polynomial rings  over an algebraically closed field $k$ of characteristic zero? It is well-known that, if $B$ is integral over $A$ then $B=A$, \cite[Theorem 47]{Wang80}, \cite[Theorem 3.3]{Wright81}. Then the problem considered makes sense.  On the other hand, it is known that the integral closure of $A$ in $\mathrm{Frac(B)}$ is the intersection $\overline{A}=\cap \mathcal{V}$ of the family of valuations rings $\mathcal{V}$ of $\mathrm{Frac(B)}$ that contains $A$, \cite[Theorem 19.8]{Gilmer_92}. Therefore, we can write $\overline{A}=B\cap \left(\cap \mathcal{V}\right)$, where the intersection is taken over all valuations rings $\mathcal{V}$ of $\mathrm{Frac}(B)$ that contain $A$ and does not contain $B$. The question can be reformulated as: how many valuations rings are there?, can we describe them? 

Recently, Rangachev \cite{Rangachev} proved that if $A \subset B$ are integral domains, $A$ is Noetherian and $B$ is a finitely generated $A$-algebra then there are a finite number of uniquely determined discrete valuation rings $\mathcal{V}_{i}$ each of which is a localization of $\overline{A}$ at a height one prime ideal, that is,  $\overline{A}=B\cap \left(\cap_{i=1}^{r} \mathcal{V}_{i}\right)$, $\mathcal{V}_{i}=\overline{A}_{\mathfrak{p}_{i}}$, $\mathrm{ht}(\mathfrak{p}_{i})=1$. In the present work we study the simplest possible case, that is, we assume that there is only one discrete valuation ring in the above decomposition, $\overline{A}=B\cap \overline{A}_{\mathfrak{p}}$. Due to Rangachev's results this is equivalent to assume that $\overline{A}$ is a primary $\overline{A}$-submodule of $B$, see Lemma \ref{main-lemma}. 

We prove in Proposition \ref{prop_main_proposition} that if $A=k[\vect y n]\subset B=k[\vect x n]$ is an \'{e}tale extension of polynomial rings over an algebraically closed field $k$ of characteristic zero, such that $\overline{A}\neq B$ and $\overline{A}$ is a primary $\overline{A}$-submodule of $B$, then $\operatorname{Spec}(\overline{A})\setminus\operatorname{Spec}(B)=V\left(\mathfrak{p}\right)$, where $\mathfrak{p}$ is a prime ideal of height one in $\overline{A}$, and the divisor class group $\mathrm{Cl}(\operatorname{Spec}(\overline{A}))\cong \mathbb{Z}\cong \left\langle V\left(\mathfrak{p}\right)\right\rangle$. Moreover, the prime divisor $V\left(\mathfrak{p}\right)$ coincides with the ramification divisor of the finite extension $A\subset \overline{A}$, see Corollary \ref{cor_ramification}.

In Section \ref{sec_dimension_two} we apply this proposition to carry out Wright's geometric approach \cite{Wright97} for two-dimensional non-integral étale extensions. Wright proved that, given an \'{e}tale extension $A=\mathbb{C}[p,q]\subset B=\mathbb{C}[x,y]$ (that satisfies some assumptions, see Remark \ref{rmk_Wright_assumption}) such that $\overline{A}\neq B$, then we can construct a normal affine variety $V$, containing $\Spec(B)$ as an open subvariety, such that $V$ admits a map to $\mathbb{P}^{1}$ making it an $\mathbb{A}^{1}$-bundle except over one point of $\mathbb{P}^{1}$ whose fiber is $V\setminus\Spec(B)$ set-theoretically. We proved that if $\overline{A}$ is a primary $\overline{A}$-submodule of $B$ the obstacles encountered by Wright to prove that $V$ is an $\mathbb{A}^{1}$-bundle over $\mathbb{P}^{1}$ can be surpassed. The difficulties are: $\operatorname{Spec}(\overline{A})$ could have singularities, and even if it did not have any singularities, the fiber at infinity  may be non-reduced. Due to Miyanishi's work, Proposition \ref{prop_main_proposition} implies that  $\operatorname{Spec}(\overline{A})$ is a smooth surface, see Proposition \ref{prop-smooth_integral_closure}. We proved the following result:
\begin{theorem*}
	Let $A=\mathbb{C}[p,q]\subset B=\mathbb{C}[x,y]$ be an \'{e}tale extension such that $\overline{A}\neq B$ and $\overline{A}$ is a primary $\overline{A}$-submodule of $B$, then $\operatorname{Spec}(\overline{A})$ is a smooth affine surface that admits the structure of an $\mathbb{A}^{1}$-bundle over $\mathbb{P}^{1}$, $\pi: \operatorname{Spec}(\overline{A})\to \mathbb{P}^{1}$ such that the open embedding of $\operatorname{Spec}(B)\cong \mathbb{A}^{2}$ in $\operatorname{Spec}(\overline{A})$ coincides with the complement of the fiber of $\pi$ over the infinity and $\pi\vert_{\operatorname{Spec}(B)}:\mathbb{A}^{2}\to \mathbb{P}^{1}\setminus\{\infty\}=\mathbb{A}^{1}$ is one of the standard projections. Moreover, $\operatorname{Spec}(\overline{A})$ is a Danilov-Gizatullin surface of index three.
\end{theorem*}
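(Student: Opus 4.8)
The plan is to identify Wright's variety with $V:=\operatorname{Spec}(\overline{A})$ and then remove the two gaps he left open. First I would record the structural data already available: $V$ is a normal affine surface because $\overline{A}$ is integrally closed, it is \emph{smooth} by Proposition \ref{prop-smooth_integral_closure}, and by Proposition \ref{prop_main_proposition} the open immersion $\operatorname{Spec}(B)\cong\mathbb{A}^{2}\hookrightarrow V$ has as complement the prime divisor $D:=V(\mathfrak{p})$, whose class generates $\operatorname{Cl}(V)\cong\operatorname{Pic}(V)\cong\mathbb{Z}$. Wright's construction \cite{Wright97} then produces a morphism $\pi\colon V\to\mathbb{P}^{1}$ which restricts to an $\mathbb{A}^{1}$-bundle over $\mathbb{A}^{1}=\mathbb{P}^{1}\setminus\{\infty\}$ and whose fibre over $\infty$ is supported on $D$. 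His two difficulties were that $V$ might be singular and that this last fibre might be non-reduced; smoothness disposes of the first, so the crux is to show that $\pi^{-1}(\infty)$ is reduced.

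For that step I would write $\pi^{-1}(\infty)=rD$ scheme-theoretically, with $r\geq 1$ the pole order along $D$ of the function defining $\pi|_{\operatorname{Spec}(B)}$, equivalently $[F]=r[D]$ in $\operatorname{Cl}(V)$ for a general fibre $F$. The point I would stress is that a genuine multiple fibre of an $\mathbb{A}^{1}$-fibration on a smooth surface forces torsion: over $\mathbb{A}^{1}$ the fibration is the trivial bundle $\mathbb{A}^{2}\to\mathbb{A}^{1}$, and gluing in a fibre of multiplicity $r$ over $\infty$ makes the loop around $\infty$, which is nullhomotopic in the base $\mathbb{P}^{1}$, lift to a class of order $r$, so that $H_{1}(V;\mathbb{Z})$, hence $\operatorname{Pic}(V)_{\mathrm{tors}}$, acquires a $\mathbb{Z}/r$ summand. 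Since $\operatorname{Cl}(V)\cong\mathbb{Z}$ is torsion-free by Proposition \ref{prop_main_proposition}, this gives $r=1$; I would phrase the argument through Miyanishi's analysis of multiple fibres of $\mathbb{A}^{1}$-fibrations rather than purely topologically. Consequently every fibre of $\pi$ is a reduced $\mathbb{A}^{1}$ and $\pi$ is an honest $\mathbb{A}^{1}$-bundle over all of $\mathbb{P}^{1}$. This is exactly where the primary-submodule hypothesis, through the torsion-freeness of $\operatorname{Cl}(V)$, is indispensable, and I expect it to be the main obstacle.

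With reducedness in hand, $\pi^{-1}(\mathbb{A}^{1})=\operatorname{Spec}(B)\cong\mathbb{A}^{2}$ is an $\mathbb{A}^{1}$-bundle over $\mathbb{A}^{1}$, hence trivial, and the induced $\mathbb{A}^{1}$-fibration of $\mathbb{A}^{2}$ over $\mathbb{A}^{1}$ is, after an automorphism, a coordinate projection, since any $\mathbb{A}^{1}$-fibration of the plane with irreducible general fibre is equivalent to a standard projection. This yields the asserted description of $\pi|_{\operatorname{Spec}(B)}$. Compactifying the $\mathbb{A}^{1}$-bundle fibrewise adds a section $S$ at infinity and realises $V=\mathbb{F}_{e}\setminus S$ inside a Hirzebruch surface; affineness of $V$ forces $S$ to be ample, so by definition $V$ is a Danilov–Gizatullin surface of index $n=S^{2}$.

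It remains to compute $n$. Restricting $K_{\mathbb{F}_{e}}=-2C_{0}-(e+2)f$ to $V$ under the surjection $\operatorname{Pic}(\mathbb{F}_{e})\twoheadrightarrow\operatorname{Pic}(V)=\operatorname{Pic}(\mathbb{F}_{e})/\mathbb{Z}[S]\cong\mathbb{Z}[D]$ gives $K_{V}=(n-2)[D]$, where $[D]$ is the image of the fibre class. On the other hand the normalisation $g\colon V\to\operatorname{Spec}(A)=\mathbb{A}^{2}$ is finite with $K_{\mathbb{A}^{2}}\sim 0$, so the Hurwitz formula gives $K_{V}\sim R$, the ramification divisor, which by Corollary \ref{cor_ramification} is carried by $D$. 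Comparing multiplicities, $n-2=e_{0}-1$ where $e_{0}$ is the ramification index of $g$ along $D$; the local analysis at $\mathfrak{p}$, giving ramification index two (that is, $R=D$ exactly), then yields $n=3$. I expect the determination that $R=D$ with multiplicity one, equivalently $e_{0}=2$, to be the delicate arithmetic point of this last computation, the genuinely hard conceptual step having already been settled in the reducedness argument above.
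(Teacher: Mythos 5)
Your proposal breaks down at exactly the step you identify as the crux. The claim that a multiple fibre $rD$ over $\infty$ would force a $\mathbb{Z}/r$ summand in $H_{1}(V;\mathbb{Z})$, hence torsion in $\operatorname{Pic}(V)$, is false: multiple fibres of $\mathbb{A}^{1}$-fibrations do not behave like multiple fibres of Seifert, elliptic, or $\mathbb{C}^{*}$-fibrations, precisely because the fibre $\mathbb{A}^{1}$ is contractible. Concretely, a tubular neighbourhood $N$ of $D\cong\mathbb{A}^{1}$ in the smooth surface $V$ is a trivial disc bundle over a contractible base, $N\setminus D$ is homotopy equivalent to $S^{1}$, and van Kampen applied to $V=\mathbb{A}^{2}\cup N$ gives $\pi_{1}(V)=1$ no matter what $r$ is: the would-be order-$r$ loop bounds inside $\mathbb{A}^{2}$. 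Algebraically the same failure is visible: since the units of $\mathbb{C}[x,y]$ are constants, the map $\mathbb{Z}[D]\to\operatorname{Cl}(V)$ in the localization sequence is injective, so $\operatorname{Cl}(V)\cong\mathbb{Z}$ holds \emph{automatically} for any smooth affine $V$ containing $\mathbb{A}^{2}$ with irreducible complement $D$; the multiplicity $r$ manifests itself not as torsion but as divisibility, $[\pi^{-1}(c)]=r[D]$ for a general fibre, which contradicts nothing. The tom Dieck--Petrie surfaces (and more generally affine pseudo-planes) are smooth \emph{contractible} affine surfaces carrying $\mathbb{A}^{1}$-fibrations with multiple fibres, so no torsion-theoretic mechanism, and no appeal to ``Miyanishi's analysis of multiple fibres'' filtered through the class group alone, can yield $r=1$. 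This is why the paper does not argue this way: in its proof (Proposition \ref{prop-smooth_integral_closure} and Theorem \ref{thm_p1_bundle}) both smoothness and the scheme-theoretic identification of the boundary with $\mathbb{A}^{1}$ come from the structure theory of normalizations of $\mathbb{A}^{2}$ in étale covers (Miyanishi; Gurjar--Miyanishi, Theorem \ref{thm-Gurjar_Miyanishi}), i.e.\ from the étale morphism itself; the primary hypothesis enters only through Proposition \ref{prop_main_proposition}, to guarantee the boundary is a single prime divisor.

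Two further divergences are worth noting. First, you produce $\pi$ from Wright's construction (Theorem \ref{thm_Wright_bundle}), but by Remark \ref{rmk_Wright_assumption} that construction presupposes a normalization of $p,q$ achievable only after composing with a linear automorphism; the paper deliberately avoids this by extending the two standard projections of $\operatorname{Spec}(B)\cong\mathbb{A}^{2}$ via Miyanishi's extension lemma and showing, by the separation-of-points argument of Gurjar--Masuda--Miyanishi--Russell, that at least one of them extends over $\mathbb{P}^{1}$ --- this is also what makes $\pi\vert_{\operatorname{Spec}(B)}$ literally one of the standard projections, rather than a coordinate projection only up to an automorphism as in your write-up. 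Second, your computation of the index is left open at the decisive point: you need the ramification divisor to equal $D$ with multiplicity one (ramification index $2$), and you only ``expect'' this. The paper instead quotes its Corollaries \ref{cor_ramification} and \ref{cor_canonical_divisor}, giving $K_{V}=R_{h}=V(\mathfrak{p})$, so that the canonical class generates $\operatorname{Pic}(V)$ and the Danilov--Gizatullin relation $K_{V}=(S^{2}-2)F$ forces $S^{2}=3$ in Proposition \ref{prop_V_is_DG}; citing these would have closed your last step without any new local analysis.
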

That $\operatorname{Spec}(\overline{A})$ is a Danilov-Gizatullin surface of index three, see Proposition \ref{prop_V_is_DG}, follows because we deduced from Proposition \ref{prop_main_proposition} that the Picard group of $\operatorname{Spec}(\overline{A})$ is generated by its canonical class, see Corollary \ref{cor_canonical_divisor}. Among the Danilov-Gizatullin surfaces the only one whose Picard group is generated by the canonical class are those of index three.

We determine the explicit form of the integral closure using Wright's explicit description of the coordinates ring of an $\mathbb{A}^{1}$-bundle over $\mathbb{P}^{1}$ and Wright's characterization of when such bundles can factorize an étale morphism of the affine plane. More precisely, we prove that if $A=\mathbb{C}[p,q]\subset B=\mathbb{C}[x,y]$ is an \'{e}tale extension such that $\overline{A}\neq B$ and $\overline{A}$ is a primary $\overline{A}$-submodule of $B$, then $\overline{A}=\mathbb{C}[y, xy, x^{2}y, x^{3}y+\alpha x]$, where $\alpha\in \mathbb{C}$, $\alpha\neq 0$; in particular $p,q\in \mathbb{C}[y, xy, x^{2}y, x^{3}y+\alpha x]$, see Proposition \ref{prop_description_integral_closure}. Wright noted that $\mathbb{C}[y, xy, x^{2}y, x^{3}y+\alpha x]$ is a graded algebra with $\textrm{deg}\;x=-1$ and $\textrm{deg}\;y=2$. Using this observation we show that $\mathbb{C}[y, xy, x^{2}y, x^{3}y+\alpha x]$ does not contain polynomials that are regular in both variables, see Lemma \ref{lmm-non_regular_element}. Nevertheless, we can always choose a linear automorphism that makes, for example, the polynomial $p$ regular in both variables, from there we arrive at the following result:
\begin{theorem*}
	Suppose that $\overline{A}$ is a primary $\overline{A}$-submodule of $B$ for every étale extension $A=\mathbb{C}[p,q]\subset B=\mathbb{C}[x,y]$. Then every \'{e}tale extension of polynomial rings  $\mathbb{C}[p,q]\subset \mathbb{C}[x,y]$ is integral.
\end{theorem*}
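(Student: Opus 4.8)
The plan is to argue by contradiction, feeding the explicit description of $\overline{A}$ from Proposition~\ref{prop_description_integral_closure} into the obstruction of Lemma~\ref{lmm-non_regular_element}. First I would record the elementary reduction that, for an étale extension $A=\mathbb{C}[p,q]\subseteq B=\mathbb{C}[x,y]$, integrality is equivalent to $\overline{A}=B$. Indeed $B$ is a polynomial ring, hence normal, and since $A\subseteq B$ every element of $\operatorname{Frac}(B)$ integral over $A$ is integral over $B$ and therefore lies in $B$; thus $\overline{A}\subseteq B$, and the extension is integral exactly when $B\subseteq\overline{A}$, i.e.\ when $\overline{A}=B$ (in which case $B=A$ by \cite[Theorem~47]{Wang80}, \cite[Theorem~3.3]{Wright81}). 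So it suffices to show that, under the standing hypothesis, no étale extension can have $\overline{A}\neq B$.

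Assume, for contradiction, that some étale extension $A=\mathbb{C}[p,q]\subseteq B=\mathbb{C}[x,y]$ satisfies $\overline{A}\neq B$. Before doing anything else I would normalize the map: a generic linear change of the coordinates of $\operatorname{Spec}(B)=\mathbb{A}^2$ makes any fixed nonconstant polynomial simultaneously monic in $x$ and in $y$, so after such a change we may assume that $p$ is regular in both variables. This change is an automorphism of $B$, so the new extension is again étale with $\overline{A}\neq B$, and by the universal hypothesis $\overline{A}$ is still a primary $\overline{A}$-submodule of $B$. Now I would invoke Proposition~\ref{prop_description_integral_closure}: it produces coordinates in which $\overline{A}=\mathbb{C}[y,xy,x^{2}y,x^{3}y+\alpha x]$ with $\alpha\neq 0$, and in particular $p,q\in\overline{A}$. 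If the normalizing automorphism of this last step can be arranged to leave $p$ regular in both variables, then $p$ is an element of $\mathbb{C}[y,xy,x^{2}y,x^{3}y+\alpha x]$ that is regular in both variables, directly contradicting Lemma~\ref{lmm-non_regular_element}. Hence $\overline{A}=B$ for every étale extension, and by the first paragraph every such extension is integral.

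The hard part—and the step I expect to carry the real weight—is precisely the compatibility just flagged: Lemma~\ref{lmm-non_regular_element} is a statement in Wright's special coordinates, whereas regularity of $p$ is produced by a separate linear change that a priori destroys that form, so the two normalizations must be reconciled. The tension is genuine, because the symmetries of $\mathbb{C}[y,xy,x^{2}y,x^{3}y+\alpha x]$ coming from its grading $\deg x=-1$, $\deg y=2$ (the induced $\mathbb{G}_m$-action together with $x\mapsto x+c$) all preserve the property that the top-degree form of any element is divisible by $y$, hence can never turn a non-regular element into a regular one; so the reconciling automorphism cannot be one of these. I would resolve this exactly where Wright's geometry enters: using his analysis of which index-three Danilov-Gizatullin surfaces factor an étale self-map of $\mathbb{A}^2$ and his explicit description of the associated coordinate rings \cite{Wright97}, one pins down the possible embeddings $\mathbb{A}^2\hookrightarrow\operatorname{Spec}(\overline{A})$ as the complement of the fiber at infinity. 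Tracking the regularity of $p$ through this explicit description should show either that the normalization to Wright's form can be taken to preserve "regular in both variables" (giving the clean contradiction with Lemma~\ref{lmm-non_regular_element} above), or that no étale factorization survives the constraint—both of which force $\overline{A}=B$ and thus integrality.
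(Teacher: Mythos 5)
Your first two paragraphs are, in substance, the paper's entire proof: reduce integrality to $\overline{A}=B$, compose with a linear automorphism so that $p$ becomes regular in both variables (\'etaleness, non-integrality and the primary hypothesis are preserved), apply Proposition~\ref{prop_description_integral_closure}, and contradict Lemma~\ref{lmm-non_regular_element}. The paper stops there because it uses Proposition~\ref{prop_description_integral_closure} literally as stated: the description $\overline{A}=\mathbb{C}[w,vw,v^{2}w,v^{3}w+\beta v]$ holds in the very coordinates $(v,w)$ in which $r$ was made regular, so $r$ is an element of that algebra regular in both variables, which Lemma~\ref{lmm-non_regular_element} forbids. You decline to read the proposition that way, flag the need to reconcile the regularity normalization with Wright's coordinates, and then never reconcile them: your final paragraph is a plan, not an argument. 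Its concluding disjunction --- that tracking regularity should show either that the normalization to Wright's form preserves regularity, or that no \'etale factorization survives --- is established in neither branch (the second is not even a precise claim). So, as written, the proposal does not prove the theorem; it is incomplete at exactly the step you yourself identify as carrying the real weight. Had you simply invoked Proposition~\ref{prop_description_integral_closure} as stated, your second paragraph would already be a complete proof identical to the paper's.

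That said, the fault line you point at is genuine, and it is worth locating precisely: it is a gap (if anywhere) in the proof of Proposition~\ref{prop_description_integral_closure}, not an additional step this theorem needs beyond that proposition. What that proposition's proof actually extracts from Wright's Theorem~\ref{Wright_coord} is \emph{some} fiber coordinate $y_{W}\in\overline{A}$ with $\mathbb{C}[x,y_{W}]=\mathbb{C}[x,y]$, i.e.\ $y_{W}=cy+f(x)$, $c\neq 0$; the paper silently takes $y_{W}=y$. The discrepancy is harmless when $\deg f\geq 2$: substituting $y=(y_{W}-f(x))/c$ into a degree-$n$ polynomial regular in both variables yields a polynomial of degree $n\deg f$ whose top form is a nonzero multiple of the pure power $x^{n\deg f}$ (coming from the $y^{n}$ term), and the proof of Lemma~\ref{lmm-non_regular_element} uses only regularity in $x$, so the contradiction survives. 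But it is fatal in the affine case: if $f=f_{1}x+f_{0}$ with $f_{1}\neq 0$, then $y_{W}=cy+f_{1}x+f_{0}$ is itself an element of $\overline{A}$ that is regular in both $x$ and $y$, so in the original coordinates no contradiction with Lemma~\ref{lmm-non_regular_element} is available at all. Ruling out this affine case --- say by exploiting the remaining freedom in the initial linear normalization, or by sharpening Proposition~\ref{prop_description_integral_closure} to control $f$ --- is precisely the content your third paragraph gestures at but does not supply; your observation about the graded symmetries of $\mathbb{C}[y,xy,x^{2}y,x^{3}y+\alpha x]$ only confirms the problem is nontrivial, it does not solve it.
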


\section{Étale extensions whose integral closure is a primary submodule}	
Recently, Rangachev proved a valuation theorem for Noetherian rings under quite general hypothesis. 
\begin{theorem}{\cite[Theorem 1.1]{Rangachev}}\label{Rangachev_theorem}
	Let $A\subset B$ be integral domains, suppose $A$ is Noetherian and $B$ is a finitely generated $A$-algebra. Denote by $\overline{A}$ the integral closure of $A$ in $B$. Then one of the following holds:
	\begin{itemize}
		\item[(i)] $\overline{A}=B$;
		\item[(ii)] $\mathrm{Ass}_{\overline{A}}\left(B/\overline{A}\right)=\{(0)\}$;\item[(iii)] $\mathrm{Ass}_{\overline{A}}\left(B/\overline{A}\right)=\{\mathfrak{p}_{1}, \ldots, \mathfrak{p}_{r}\}$, $\mathrm{ht}(\mathfrak{p}_{i})=1$ for all $i$, and $\overline{A}=B\cap \left(\cap_{i=1}^{r} \mathcal{V}_{i}\right)$, where $\mathcal{V}_{i}=\overline{A}_{\mathfrak{p}_{i}}$.
	\end{itemize} 
\end{theorem}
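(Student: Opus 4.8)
The plan is to study the $\overline{A}$-module $M:=B/\overline{A}$ through its associated primes, localizing at each one to reduce the global claim to a discrete valuation ring computation. Throughout write $L:=\mathrm{Frac}(\overline{A})$, and for $b\in B$ set $I_{b}:=(\overline{A}:_{\overline{A}}b)=\{c\in\overline{A}:cb\in\overline{A}\}$, so that $I_{b}=\mathrm{Ann}_{\overline{A}}(\bar b)$ for the class $\bar b\in M$. First I would record the two elementary observations that organize the trichotomy. A nonzero prime $\mathfrak{p}$ lies in $\mathrm{Ass}_{\overline{A}}(M)$ exactly when $\mathfrak{p}=I_{b}$ for some $b$; since then some nonzero $c\in\overline{A}$ satisfies $cb\in\overline{A}$, any such $b$ automatically lies in $L$. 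On the other hand $(0)\in\mathrm{Ass}_{\overline{A}}(M)$ precisely when some $b\in B$ has $I_{b}=(0)$, and clearing a single denominator shows this happens if and only if $B\not\subseteq L$. Thus the dichotomy ``$B\subseteq L$ or not'' is exactly what separates cases (ii) and (iii), while (i) is the case $M=0$, i.e. $B$ integral over $A$.

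The core step is to show that every nonzero $\mathfrak{p}\in\mathrm{Ass}_{\overline{A}}(M)$ has height one and that $\overline{A}_{\mathfrak{p}}$ is a discrete valuation ring. I would localize: set $R:=\overline{A}_{\mathfrak{p}}$, a local domain with maximal ideal $\mathfrak{m}=\mathfrak{p}R$. Since $\overline{A}$ is integrally closed in $B$ and localization commutes with integral closure, $R$ is integrally closed in $B_{\mathfrak{p}}$. Writing $\mathfrak{p}=I_{b}$ gives an element $b\in\mathrm{Frac}(R)\setminus R$ with $\mathfrak{m}\,b\subseteq R$; in fact a short computation identifies $\mathfrak{m}$ with the conductor $(R:_{R}b)$. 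Now either $\mathfrak{m}\,b\subseteq\mathfrak{m}$, in which case the determinant trick makes $b$ integral over $R$ and hence, by integral closedness of $R$ in $B_{\mathfrak{p}}\ni b$, forces $b\in R$, a contradiction; or $\mathfrak{m}\,b=R$. In the latter case $1=mb$ for some $m\in\mathfrak{m}$, whence $b=m^{-1}$ and $\mathfrak{m}=mR$ is principal. A local domain with nonzero principal maximal ideal is a DVR once one knows $\bigcap_{n}\mathfrak{m}^{n}=0$; I would secure the requisite normality and finiteness conditions at height one by invoking the Mori--Nagata theorem, which gives that the integral closure of the Noetherian domain $A$ is a Krull domain. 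This yields $\mathrm{ht}\,\mathfrak{p}=1$ and $\mathcal{V}_{\mathfrak{p}}=R=\overline{A}_{\mathfrak{p}}$.

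Next I would prove finiteness of the set of height-one associated primes together with the valuation formula. Finiteness I would extract from the Krull structure: the relevant $\mathfrak{p}$ are the height-one primes detected by the conductor of $\overline{A}$ in the finitely generated algebra $B$, of which there are only finitely many (equivalently, they are the finitely many Rees valuations of $B$ over $\overline{A}$). For $\overline{A}=B\cap\bigcap_{i}\mathcal{V}_{i}$ the inclusion $\subseteq$ is immediate since each $\mathcal{V}_{i}=\overline{A}_{\mathfrak{p}_{i}}\supseteq\overline{A}$. For $\supseteq$, take $b\in B$ with $b\in\overline{A}_{\mathfrak{p}_{i}}$ for every $i$, equivalently $I_{b}\not\subseteq\mathfrak{p}_{i}$ for all $i$. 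If $\bar b\neq0$ then $\overline{A}/I_{b}\cong\overline{A}\bar b\subseteq M$ has an associated prime $\mathfrak{q}\in\mathrm{Ass}_{\overline{A}}(M)$ with $I_{b}\subseteq\mathfrak{q}$; in case (iii) $\mathfrak{q}$ is one of the $\mathfrak{p}_{i}$, contradicting $I_{b}\not\subseteq\mathfrak{p}_{i}$. Hence $\bar b=0$, i.e. $b\in\overline{A}$, which gives (iii).

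Finally I would assemble the trichotomy: if $M=0$ we are in case (i); if $B\not\subseteq L$ then $(0)\in\mathrm{Ass}_{\overline{A}}(M)$ and one must rule out any further associated prime to land in case (ii); otherwise $B\subseteq L$, every associated prime is nonzero hence height one, finite in number, and the valuation formula holds, giving case (iii). I expect the main obstacle to be twofold. First, the clean separation of (ii) from (iii): when $B$ contains elements transcendental over $L$ one must show that no divisorial (height-one) associated prime can additionally occur, so that $\mathrm{Ass}_{\overline{A}}(M)=\{(0)\}$ \emph{exactly}; this is the delicate point, and I would attack it by comparing $B\cap L$ with $\overline{A}$ and exploiting that $\overline{A}$ is integrally closed in $B$. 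Second, and running through the whole argument, is the control of $\overline{A}$ in the absence of its own Noetherianity: since the integral closure of a general Noetherian domain need not be Noetherian, both the determinant step and the existence and finiteness of associated primes must be routed through the complete integral closedness and Krull-domain structure furnished by Mori--Nagata, rather than through naive Noetherian localization.
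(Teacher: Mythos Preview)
The paper does not supply a proof of this statement: Theorem~\ref{Rangachev_theorem} is quoted from \cite{Rangachev} as an external input, and the paper's own arguments (Lemma~\ref{main-lemma}, Proposition~\ref{prop_main_proposition}, etc.) merely \emph{apply} it. So there is no in-paper proof to compare against, and your proposal is effectively a sketch of Rangachev's argument rather than something the present paper does.

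On the substance of your sketch: the local analysis at a nonzero associated prime is the right idea, and the dichotomy ``$\mathfrak m b\subseteq\mathfrak m$ versus $\mathfrak m b=R$'' together with the determinant trick is the standard route to showing such primes are divisorial. You correctly flag the two genuine difficulties. The first, separating (ii) from (iii), is not cosmetic: nothing you have written excludes $\mathrm{Ass}_{\overline{A}}(B/\overline{A})=\{(0),\mathfrak p\}$ with $\mathrm{ht}\,\mathfrak p=1$, which would place the pair $A\subset B$ in none of the three listed cases; saying you ``would attack it by comparing $B\cap L$ with $\overline A$'' is a plan, not an argument. The second difficulty is more serious in your write-up than you indicate. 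Your determinant step requires $\mathfrak m=\mathfrak p\overline A_{\mathfrak p}$ to be finitely generated, and your appeal to Mori--Nagata is not yet justified: Mori--Nagata gives Krullness for the integral closure of a Noetherian domain in a \emph{finite} field extension of its fraction field, whereas here $\overline A$ is the integral closure of $A$ in $B$, and you have not shown that $\mathrm{Frac}(\overline A)$ is finite over $\mathrm{Frac}(A)$ (in case (ii) it is algebraic but a priori could be infinite). Likewise your finiteness of the $\mathfrak p_i$ via ``the conductor of $\overline A$ in the finitely generated algebra $B$'' presupposes exactly the Noetherian/Krull control you have not established. These are the places where a complete proof needs real work; as written, the proposal is a plausible outline with acknowledged gaps rather than a proof.
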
 
Also, under the same hypothesis, he provides a description of  the support of $B/\overline{A}$ as a closed set.
\begin{proposition}{\cite[Proposition 2.5]{Rangachev}}\label{Rangachev_proposition}
	Assume we are in the situation of Theorem \ref{Rangachev_theorem}. Denote by $\mathcal{I}_{B/\overline{A}}$ the intersection of all elements in $\mathrm{Ass}_{\overline{A}}\left(B/\overline{A}\right)$. Then $\mathrm{Supp}_{\overline{A}}\left(B/\overline{A}\right)=V\left(\mathcal{I}_{B/\overline{A}}\right)$.
\end{proposition}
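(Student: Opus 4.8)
The plan is to prove the two inclusions
$V(\mathcal{I}_{B/\overline{A}})\subseteq\mathrm{Supp}_{\overline{A}}(B/\overline{A})$ and
$\mathrm{Supp}_{\overline{A}}(B/\overline{A})\subseteq V(\mathcal{I}_{B/\overline{A}})$ separately, writing $M=B/\overline{A}$ and working through the three mutually exclusive alternatives supplied by Theorem \ref{Rangachev_theorem}. The essential difficulty is concentrated in the second inclusion, and it stems from the fact that $M$ is in general \emph{not} a finitely generated $\overline{A}$-module: one therefore cannot simply invoke the identity $\mathrm{Supp}(M)=V(\mathrm{Ann}(M))$ that holds for finitely generated modules, and a direct localization argument is needed instead.

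For the first inclusion I would argue in complete generality, using only that $\mathrm{Ass}_{\overline{A}}(M)$ is \emph{finite}, which is exactly what each alternative of Theorem \ref{Rangachev_theorem} asserts. Every associated prime lies in the support, and the support is stable under specialization, since $M_{\mathfrak{p}}$ is a localization of $M_{\mathfrak{q}}$ whenever $\mathfrak{p}\subseteq\mathfrak{q}$; hence $V(\mathfrak{p})\subseteq\mathrm{Supp}_{\overline{A}}(M)$ for each $\mathfrak{p}\in\mathrm{Ass}_{\overline{A}}(M)$. Because the family of associated primes is finite, $V(\mathcal{I}_{B/\overline{A}})=V(\bigcap_{\mathfrak{p}}\mathfrak{p})=\bigcup_{\mathfrak{p}}V(\mathfrak{p})\subseteq\mathrm{Supp}_{\overline{A}}(M)$, the middle equality being the elementary fact that the vanishing locus of a finite intersection of ideals is the union of their vanishing loci. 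In case (i) this is vacuous, and in case (ii) it already yields $\Spec(\overline{A})=V((0))\subseteq\mathrm{Supp}_{\overline{A}}(M)\subseteq\Spec(\overline{A})$, settling that alternative completely.

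The reverse inclusion is the heart of the matter, and I would establish it by clearing denominators against the valuation decomposition of alternative (iii), cases (i) and (ii) being immediate (the first because both sides are empty, the second by the previous paragraph since there $V(\mathcal{I}_{B/\overline{A}})=\Spec(\overline{A})$). So suppose $\overline{A}=B\cap\bigcap_{i=1}^{r}\mathcal{V}_{i}$ with each $\mathcal{V}_{i}=\overline{A}_{\mathfrak{p}_{i}}$ a discrete valuation ring of $\mathrm{Frac}(B)$, of valuation $v_{i}$, and let $\mathfrak{q}$ be a prime with $\mathfrak{q}\notin V(\mathcal{I}_{B/\overline{A}})$, that is $\mathfrak{p}_{i}\not\subseteq\mathfrak{q}$ for every $i$. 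To show $M_{\mathfrak{q}}=0$ it suffices to prove that every $b\in B$ satisfies $tb\in\overline{A}$ for some $t\in\overline{A}\setminus\mathfrak{q}$. I would pick $t_{i}\in\mathfrak{p}_{i}\setminus\mathfrak{q}$ (possible precisely because $\mathfrak{p}_{i}\not\subseteq\mathfrak{q}$), so that $v_{i}(t_{i})\geq 1$ while $v_{j}(t_{i})\geq 0$ for $j\neq i$ since $t_{i}\in\overline{A}\subseteq\mathcal{V}_{j}$, and then set $t=(\prod_{i=1}^{r}t_{i})^{N}$ for an integer $N\geq\max_{i}\{-v_{i}(b)\}$. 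Then $v_{i}(t)\geq N$ for all $i$, so $v_{i}(tb)=v_{i}(t)+v_{i}(b)\geq N+v_{i}(b)\geq 0$, whence $tb\in B\cap\bigcap_{i}\mathcal{V}_{i}=\overline{A}$; and $t\in\overline{A}\setminus\mathfrak{q}$ because $\mathfrak{q}$ is prime and each $t_{i}\notin\mathfrak{q}$. Thus the image of $b$ vanishes in $M_{\mathfrak{q}}$, giving $\mathfrak{q}\notin\mathrm{Supp}_{\overline{A}}(M)$, and the two inclusions together yield the asserted equality.

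The main obstacle, and the reason the finitely generated shortcut is unavailable, is exactly the step carried out in the last paragraph: for an arbitrary $b\in B$ one must produce a \emph{single} multiplier $t$ outside $\mathfrak{q}$ that simultaneously pushes $b$ into every valuation ring $\mathcal{V}_{i}$. The finiteness of the family $\{\mathcal{V}_{i}\}$ granted by Theorem \ref{Rangachev_theorem} is precisely what makes the uniform choice of the exponent $N$ legitimate; without it the union $\bigcup_{i}V(\mathfrak{p}_{i})$ need not even be closed, and the clearing-denominators argument would break down. Beyond this, I expect only routine bookkeeping in checking $v_{i}(t_{i})\geq 1$ and $v_{j}(t_{i})\geq 0$, which uses nothing more than that $\mathfrak{p}_{i}\overline{A}_{\mathfrak{p}_{i}}$ is the maximal ideal of $\mathcal{V}_{i}$ and that $\overline{A}\subseteq\mathcal{V}_{j}$.
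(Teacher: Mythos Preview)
The paper does not contain its own proof of this proposition: it is quoted verbatim from Rangachev's work \cite[Proposition~2.5]{Rangachev} and stated without argument, so there is no in-paper proof to compare against.

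That said, your argument is sound and self-contained. The first inclusion is routine, and for the reverse inclusion your clearing-denominators trick against the valuation decomposition of alternative~(iii) is exactly the right idea: choosing $t_i\in\mathfrak{p}_i\setminus\mathfrak{q}$ and raising $\prod_i t_i$ to a sufficiently high power simultaneously forces $tb$ into every $\mathcal{V}_i$ while keeping $t$ outside $\mathfrak{q}$. One cosmetic point: when you set $N\geq\max_i\{-v_i(b)\}$ you should also take $N\geq 0$ (otherwise $t$ is not defined as an element of $\overline{A}$); this is harmless but worth stating. Your emphasis that $B/\overline{A}$ is typically \emph{not} finitely generated over $\overline{A}$, so that the standard identity $\mathrm{Supp}(M)=V(\mathrm{Ann}\,M)$ is unavailable, correctly identifies why the argument requires the explicit valuation-theoretic input from Theorem~\ref{Rangachev_theorem} rather than general commutative algebra.
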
	
We want to study the case of Theorem \ref{Rangachev_theorem} where there is only one discrete valuation ring of $\overline{A}$ that does not contain $B$, that is, $\overline{A}=B\cap \overline{A}_{\mathfrak{p}}$. In that situation we have the following:	
\begin{lemma}\label{main-lemma}
Let $A\subset B$ be integral domains such that $A$ is Noetherian and $B$ is a finitely generated $A$-algebra. Denote by $\overline{A}$ the integral closure of $A$ in $B$. Suppose that $\overline{A}\ne B$ and $\mathrm{Frac}(\overline{A})=\mathrm{Frac}(B)$. Then $\overline{A}$ is a primary $\overline{A}$-submodule of $B$ if and only if $\mathrm{Ass}_{\overline{A}}\left(B/\overline{A}\right)=\{\mathfrak{p}\}$, $\mathrm{ht}(\mathfrak{p})=1$, $\overline{A}=B\cap \overline{A}_{\mathfrak{p}}$ and $\mathrm{Supp}_{\overline{A}}\left(B/\overline{A}\right)=V\left(\mathfrak{p}\right)$. 
\end{lemma}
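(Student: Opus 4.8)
The plan is to read the primary-submodule condition through associated primes and then extract all the stated consequences from Rangachev's Theorem~\ref{Rangachev_theorem} and Proposition~\ref{Rangachev_proposition}. I would begin by recording the module-theoretic fact that, for a module $M$ over a Noetherian ring $R$ and a proper submodule $N\subset M$, the submodule $N$ is primary exactly when $\mathrm{Ass}_R(M/N)$ is a singleton. This is because $N$ is primary if and only if every zero-divisor on $M/N$ lies in $\sqrt{(N:_{R}M)}$; the zero-divisors on $M/N$ are $\bigcup_{\mathfrak{q}\in\mathrm{Ass}(M/N)}\mathfrak{q}$, while $\sqrt{(N:_{R}M)}=\sqrt{\mathrm{Ann}_R(M/N)}$ is the intersection of the minimal primes of $\mathrm{Supp}(M/N)$, and the reverse containment always holds. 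Equality of these two sets forces every associated prime to coincide with a single minimal one, and conversely a singleton $\mathrm{Ass}$ makes the two sets equal. In our situation this says that $\overline{A}$ being a primary $\overline{A}$-submodule of $B$ is equivalent to $\mathrm{Ass}_{\overline{A}}(B/\overline{A})$ being a single prime, and this equivalence is the hinge of the argument.

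For the forward implication I would assume $\overline{A}$ is primary in $B$, so $\mathrm{Ass}_{\overline{A}}(B/\overline{A})=\{\mathfrak{p}\}$ for a single prime $\mathfrak{p}$, and then locate this inside Rangachev's trichotomy. Case (i) is excluded by $\overline{A}\ne B$. To rule out case (ii) I would use $\mathrm{Frac}(\overline{A})=\mathrm{Frac}(B)$: given $b\in B$, write its image in $\mathrm{Frac}(B)=\mathrm{Frac}(\overline{A})$ as $\alpha/\beta$ with $\alpha,\beta\in\overline{A}$, $\beta\ne 0$, so that $\beta b=\alpha\in\overline{A}$ and the class of $b$ is torsion; hence $B/\overline{A}$ is a torsion $\overline{A}$-module and $(0)\notin\mathrm{Ass}_{\overline{A}}(B/\overline{A})$, which excludes (ii). We are therefore in case (iii), giving $\mathrm{ht}(\mathfrak{p}_i)=1$ for each associated prime and $\overline{A}=B\cap(\cap_{i=1}^{r}\mathcal{V}_i)$ with $\mathcal{V}_i=\overline{A}_{\mathfrak{p}_i}$. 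Since $\mathrm{Ass}_{\overline{A}}(B/\overline{A})=\{\mathfrak{p}\}$ is a singleton we must have $r=1$, whence $\mathrm{ht}(\mathfrak{p})=1$ and $\overline{A}=B\cap\overline{A}_{\mathfrak{p}}$. The support statement then follows from Proposition~\ref{Rangachev_proposition}, since $\mathcal{I}_{B/\overline{A}}$, the intersection of the associated primes, equals $\mathfrak{p}$, so $\mathrm{Supp}_{\overline{A}}(B/\overline{A})=V(\mathfrak{p})$.

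The reverse implication would be immediate: among the four hypothesized conditions, the assumption that $\mathrm{Ass}_{\overline{A}}(B/\overline{A})=\{\mathfrak{p}\}$ is a singleton is, by the equivalence recalled above, precisely the assertion that $\overline{A}$ is a $\mathfrak{p}$-primary, and in particular primary, submodule of $B$; the height, intersection and support conditions play no role in this direction.

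The only delicate point I anticipate is the interplay with the fact that $B$ need not be finite as an $\overline{A}$-module, which is where associated-prime bookkeeping could in principle go wrong. The torsion argument ruling out case (ii) is elementary, and the finiteness of $\mathrm{Ass}_{\overline{A}}(B/\overline{A})$ supplied by Theorem~\ref{Rangachev_theorem} together with the closed-support description of Proposition~\ref{Rangachev_proposition} give exactly the control needed to apply the primary-versus-singleton criterion safely, so I do not expect a genuinely new obstacle beyond careful bookkeeping.
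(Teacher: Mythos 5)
Your outline tracks the paper's proof closely: case (i) is excluded by $\overline{A}\neq B$, case (ii) by $\mathrm{Frac}(\overline{A})=\mathrm{Frac}(B)$ (your torsion argument and the paper's observation that $(0)\in\mathrm{Ass}_{\overline{A}}(B/\overline{A})$ would put $(0)$ in the support are the same computation), and then case (iii) with $r=1$ together with Proposition \ref{Rangachev_proposition} yields the height, intersection and support statements. The genuine gap is in your ``hinge'' equivalence, precisely in the direction your reverse implication declares immediate. You justify it through the identity $\sqrt{\mathrm{Ann}_{R}(M/N)}=\bigcap\{\text{minimal primes of }\mathrm{Supp}_{R}(M/N)\}$. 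That is a theorem about \emph{finitely generated} modules, and $B/\overline{A}$ is never finitely generated over $\overline{A}$ under the Lemma's hypotheses: if it were, $B$ would be a finite, hence integral, $\overline{A}$-module, forcing $\overline{A}=B$. The failure is concrete. Take $A=\overline{A}=k[x]\subset B=k[x,x^{-1}]$, which satisfies every hypothesis of the Lemma; then $\mathrm{Ass}_{\overline{A}}(B/\overline{A})=\{(x)\}$ and $\mathrm{Supp}_{\overline{A}}(B/\overline{A})=V((x))$, but $\mathrm{Ann}_{\overline{A}}(B/\overline{A})=\bigcap_{n\geq 1}(x^{n})=(0)$, so the two sides of your identity are $(0)$ and $(x)$. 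For the same reason Proposition \ref{Rangachev_proposition} cannot supply ``exactly the control needed,'' contrary to your closing paragraph: it constrains $\mathrm{Supp}$, which indeed stays closed, but says nothing about $\mathrm{Ann}$, which is what degenerates.

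The same example pins down what ``primary'' must mean for the Lemma to be true at all. With the global reading of nilpotence (some $n$ with $a^{n}(B/\overline{A})=0$), $x$ is a zero divisor on $B/\overline{A}$ (as $x\cdot x^{-1}\in\overline{A}$ while $x^{-1}\notin\overline{A}$) yet $x^{n}B=B\not\subseteq\overline{A}$ for every $n$; so the right-hand side of the Lemma holds while $\overline{A}$ is not primary in $B$, and your reverse implication would be proving a false statement. The equivalence holds with the local (``almost nilpotent'') reading, in which every zero divisor $a$ on $M/N$ must satisfy $a^{n}x=0$ for each $x$, with $n$ allowed to depend on $x$; and the proof of ``$\mathrm{Ass}(M/N)=\{\mathfrak{p}\}$ implies primary'' must then be pointwise, via cyclic submodules rather than via $\mathrm{Ann}(M/N)$: for $0\neq x\in M/N$ one has $\emptyset\neq\mathrm{Ass}(Rx)\subseteq\mathrm{Ass}(M/N)=\{\mathfrak{p}\}$, and since $Rx$ is cyclic, $\mathfrak{p}$ is the unique minimal prime over $\mathrm{Ann}_{R}(x)$, so $\mathfrak{p}=\sqrt{\mathrm{Ann}_{R}(x)}$ and every $a\in\mathfrak{p}$ annihilates $x$ after a power depending on $x$. (The forward direction needs no support argument at all: a locally nilpotent element lies in every associated prime, so the union of the associated primes is contained in their intersection, forcing a singleton.) With these replacements your argument closes, and it fills in what the paper compresses into ``the equivalence now follows from the definition of primary submodule and Rangachev's results.''
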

\begin{proof}
As $\overline{A}\ne B$, this rule out the case (i) above. We always have that $\mathrm{Ass}_{\overline{A}}\left(B/\overline{A}\right)\subset \mathrm{Supp}_{\overline{A}}\left(B/\overline{A}\right)$, if $\mathrm{Ass}_{\overline{A}}\left(B/\overline{A}\right)=\{(0)\}$, then $(0) \in \mathrm{Supp}_{\overline{A}}\left(B/\overline{A}\right)$;  however, this contradicts that $\mathrm{Frac}(\overline{A})=\mathrm{Frac(B)}$. Then by Rangachev's Theorem \ref{Rangachev_theorem} we are left with the third possibility, that is,  $\mathrm{Ass}_{\overline{A}}\left(B/\overline{A}\right)=\{\mathfrak{p}_{1}, \ldots, \mathfrak{p}_{r}\}$, $\mathrm{ht}(\mathfrak{p}_{i})=1$, $\forall i$. The equivalence now follows from the definition of primary submodule \cite[Definition 8.A]{Matsumura80} and Rangachev's results.
\end{proof}	
We are going to use this lemma to prove the following proposition:
\begin{proposition}\label{prop_main_proposition}
Let $k$ be an algebraically closed field of characteristic zero. Let  $A=k[\vect y n]\subset B=k[\vect x n]$ be an \'{e}tale extension of polynomial rings (each in $n$ indeterminates). Denote by $\overline{A}$ the integral closure of $A$ in $B$. Suppose that $\overline{A}\neq B$ and that $\overline{A}$ is a primary $\overline{A}$-submodule of $B$. Then $\operatorname{Spec}(\overline{A})\setminus\operatorname{Spec}(B)=V\left(\mathfrak{p}\right)$, where $\mathfrak{p}$ is a prime ideal of height one in $\overline{A}$, and the divisor class group $\mathrm{Cl}(\operatorname{Spec}(\overline{A}))\cong \mathbb{Z}\cong \left\langle V\left(\mathfrak{p}\right)\right\rangle$.
\end{proposition}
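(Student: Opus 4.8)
The plan is to realize the induced birational morphism $f\colon\operatorname{Spec}(B)\to\operatorname{Spec}(\overline{A})$ as an open immersion whose image is exactly the complement of $V(\mathfrak{p})$, and then to read off the class group from the resulting excision sequence. First I would record the facts that place us in the setting of Lemma~\ref{main-lemma}. Since $A\subset B$ is étale of polynomial rings, $K:=\operatorname{Frac}(B)$ is finite over $\operatorname{Frac}(A)$; clearing denominators shows that every $b\in B$ satisfies $ab\in\overline{A}$ for some $0\ne a\in A$, so $\operatorname{Frac}(\overline{A})=K$. By finiteness of integral closure $\overline{A}$ is a module-finite normal extension of $A$, and $B=k[\vect x n]$ is itself normal; hence $\overline{A}$ is integrally closed in $K$. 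Lemma~\ref{main-lemma} then yields $\operatorname{Ass}_{\overline{A}}(B/\overline{A})=\{\mathfrak{p}\}$ with $\operatorname{ht}(\mathfrak{p})=1$, the equality $\overline{A}=B\cap\overline{A}_{\mathfrak{p}}$, and $\operatorname{Supp}_{\overline{A}}(B/\overline{A})=V(\mathfrak{p})$. Write $Y=\operatorname{Spec}(\overline{A})$ and $U=Y\setminus V(\mathfrak{p})$.

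The heart of the argument, and the step I expect to be the main obstacle, is to show that $f$ is an open immersion. Here I would exploit the étale hypothesis. Factor the étale (hence quasi-finite) morphism $\operatorname{Spec}(B)\to\operatorname{Spec}(A)$ as $\operatorname{Spec}(B)\xrightarrow{f}Y\xrightarrow{h}\operatorname{Spec}(A)$, where $h$ is finite because $\overline{A}$ is module-finite over $A$. Each fibre of $f$ is contained in a fibre of the composite, which is finite, so $f$ is quasi-finite; being a morphism of affine schemes it is also separated and of finite type. Zariski's Main Theorem then factors $f$ as an open immersion $\operatorname{Spec}(B)\hookrightarrow\operatorname{Spec}(C)$ followed by a finite morphism $\operatorname{Spec}(C)\to Y$, with $\overline{A}\subseteq C\subseteq B\subseteq K$ and $C$ finite, hence integral, over $\overline{A}$. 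Because $\overline{A}$ is integrally closed in $K$, this forces $C=\overline{A}$, so $f$ itself is an open immersion. The essential inputs are precisely quasi-finiteness, coming from étaleness, and the normality of $\overline{A}$.

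It remains to identify the open image $\Omega=f(\operatorname{Spec}(B))$ with $U$, which follows directly from $\operatorname{Supp}_{\overline{A}}(B/\overline{A})=V(\mathfrak{p})$. Over $\Omega$ the map $f$ is an isomorphism, so $B_{\mathfrak{q}}=\overline{A}_{\mathfrak{q}}$ and $(B/\overline{A})_{\mathfrak{q}}=0$ for every $\mathfrak{q}\in\Omega$, giving $\Omega\subseteq U$; conversely, for $\mathfrak{q}\notin V(\mathfrak{p})$ the localization $(B/\overline{A})_{\mathfrak{q}}$ vanishes, so $B_{\mathfrak{q}}=\overline{A}_{\mathfrak{q}}$ is local and $\mathfrak{q}$ lies in the image. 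Hence $f$ identifies $\operatorname{Spec}(B)$ with $U$, that is, $\operatorname{Spec}(\overline{A})\setminus\operatorname{Spec}(B)=V(\mathfrak{p})$ with $\operatorname{ht}(\mathfrak{p})=1$.

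Finally, for the class group I would invoke the excision sequence for the normal variety $Y$ and the codimension-one prime divisor $Z=V(\mathfrak{p})$, namely the exact sequence $\mathbb{Z}\to\operatorname{Cl}(Y)\to\operatorname{Cl}(U)\to 0$ in which $1\mapsto[V(\mathfrak{p})]$. Since $U\cong\operatorname{Spec}(B)\cong\mathbb{A}^{n}$ has trivial class group, $[V(\mathfrak{p})]$ generates $\operatorname{Cl}(Y)$, so the group is cyclic. To see that this generator has infinite order, suppose $m\,[V(\mathfrak{p})]=\operatorname{div}(\varphi)$ for some $\varphi\in K^{\times}$; restricting to $U$ the divisor becomes trivial, so $\varphi\in\Gamma(U,\mathcal{O}^{\times})=B^{\times}=k^{\times}$, whence $\operatorname{div}(\varphi)=0$ and $m=0$. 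Therefore $\operatorname{Cl}(\operatorname{Spec}(\overline{A}))\cong\mathbb{Z}\cong\left\langle V(\mathfrak{p})\right\rangle$, as claimed.
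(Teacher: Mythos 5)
Your proof is correct and follows the paper's overall strategy---Zariski's Main Theorem to realize $\operatorname{Spec}(B)\hookrightarrow\operatorname{Spec}(\overline{A})$ as an open immersion, Lemma~\ref{main-lemma} to produce $\mathfrak{p}$, identification of the image of $\operatorname{Spec}(B)$ with the complement of $\operatorname{Supp}_{\overline{A}}\left(B/\overline{A}\right)=V(\mathfrak{p})$, and the excision sequence $\mathbb{Z}\to\operatorname{Cl}(\operatorname{Spec}(\overline{A}))\to\operatorname{Cl}(\operatorname{Spec}(B))\to 0$ for the class group---but it differs genuinely in two places. First, where the paper quotes Raynaud's form of Zariski's Main Theorem, which directly asserts that $\operatorname{Spec}(B)\to\operatorname{Spec}(\overline{A})$ is an open immersion when $\overline{A}$ is the integral closure of $A$ in $B$, you rederive this from the general ZMT by factoring through a finite $\overline{A}$-scheme $\operatorname{Spec}(C)$ and using that $\overline{A}$ is integrally closed to force $C=\overline{A}$; this works, provided you note the standard reduction that lets you take $C$ inside $B$ (replace the finite scheme by the scheme-theoretic closure of the image of $\operatorname{Spec}(B)$, which is legitimate since $B$ is a domain). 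Second, and more substantively, your proof that $\left[V(\mathfrak{p})\right]$ has infinite order is different from the paper's and more self-contained: the paper cites Angerm\"uller's theorem that a torsion class group (i.e., $\overline{A}$ almost factorial) would force $A=B$, contradicting $\overline{A}\neq B$, while you argue directly that if $m\left[V(\mathfrak{p})\right]=\operatorname{div}(\varphi)$ then $\operatorname{div}(\varphi)$ vanishes on $U\cong\mathbb{A}^{n}$, so $\varphi$ is a unit of $B$, hence a nonzero constant, whence $\operatorname{div}(\varphi)=0$ and $m=0$. Your argument buys independence from the almost-factoriality literature at essentially no cost, using only that units of a polynomial ring are constants; the paper's citation buys brevity and ties the statement to a known rigidity result for \'etale extensions.
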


\begin{proof}
The extension $A\subset B$ is \'{e}tale, so it is quasi-finite and by Zariski's Main Theorem \cite[Corollaire 2, p. 42]{Raynaud}, the morphism $\operatorname{Spec}(B)\to \operatorname{Spec}(\overline{A})$ is an open immersion. Also, the \'{e}taleness of the extension implies that $\mathrm{Frac(A)}\subset\mathrm{Frac(B)}$  is a finite separable algebraic extension of fields, then $\overline{A}$ is a finite module over $A$, in particular $\overline{A}$ is Noetherian. It is clear that $\mathrm{Frac}(\overline{A})=\mathrm{Frac}(B)$. 

Then the \'{e}tale extension $A\subset B$ satisfies the hypothesis of Lemma \ref{main-lemma}. We are assuming that $\overline{A}$ is a primary $\overline{A}$-submodule of $B$, therefore, there exist a prime ideal $\mathfrak{p}$ of height one in $\overline{A}$ such that $\overline{A}=B\cap \overline{A}_{\mathfrak{p}}$, $\mathrm{Ass}_{\overline{A}}\left(B/\overline{A}\right)=\{\mathfrak{p}\}$, and $\mathrm{Supp}_{\overline{A}}\left(B/\overline{A}\right)=V\left(\mathfrak{p}\right)$. From this we can derive the following consequences:
\begin{itemize}[leftmargin=*]	
\item[-]$\operatorname{Spec}(\overline{A})\setminus\operatorname{Spec}(B)=V\left(\mathfrak{p}\right)$, where we identify $\operatorname{Spec}(B)$ with its image in $\operatorname{Spec}(\overline{A})$. In fact, denote by $g:\operatorname{Spec}(B)\hookrightarrow \operatorname{Spec}(\overline{A})$ the open immersion. We are going to show that $\operatorname{Im}g=\operatorname{Spec}(\overline{A})\setminus \mathrm{Supp}_{\overline{A}}\left(B/\overline{A}\right)$. Suppose that $\mathfrak{p}\in \operatorname{Im}g\subset \operatorname{Spec}(\overline{A})$, that is, there exist $\mathfrak{q}\in \operatorname{Spec}(B)$ such that $\mathfrak{p}=\mathfrak{q}\cap \overline{A}$. As $g$ is an open immersion we have that $\overline{A}_{\mathfrak{p}}=B_{\mathfrak{q}}=B_{\mathfrak{p}}$, see for example \cite[Exercise 2.3.9]{Ford17}, therefore $\mathfrak{p}\notin \mathrm{Supp}_{\overline{A}}\left(B/\overline{A}\right)$. Conversely, suppose that $\mathfrak{p}\in \operatorname{Spec}(\overline{A})$ and $\mathfrak{p}\notin \mathrm{Supp}_{\overline{A}}\left(B/\overline{A}\right)$, that is, $\overline{A}_{\mathfrak{p}}=B_{\mathfrak{p}}$. Then $B \subset B_{\mathfrak{p}}=\overline{A}_{\mathfrak{p}}$, define the prime ideal $\mathfrak{r}:=\mathfrak{p}\overline{A}_{\mathfrak{p}}\cap B$, then $\mathfrak{r}\in \operatorname{Spec}(B)$ and $\mathfrak{r}\cap \overline{A}=\mathfrak{p}$, therefore $\mathfrak{p}\in \operatorname{Im}g$. The assertion now follows because we already know that $\mathrm{Supp}_{\overline{A}}\left(B/\overline{A}\right)=V\left(\mathfrak{p}\right)$.
\item[-]$\mathrm{Cl}(\operatorname{Spec}(\overline{A}))\cong \mathbb{Z}\cong \left\langle V\left(\mathfrak{p}\right)\right\rangle$. In fact, $\operatorname{Spec}(\overline{A})$ is a noetherian normal affine scheme, $V(\mathfrak{p})$ is an irreducible closed subset of codimension one, and $\operatorname{Spec}(\overline{A})\setminus\operatorname{Spec}(B)=V\left(\mathfrak{p}\right)$, then by Hartshorne \cite[Chap. II, Prop.6.5]{Hartshorne77} there is an exact sequence $\mathbb{Z}\to \mathrm{Cl}(\operatorname{Spec}(\overline{A}))\to \mathrm{Cl}(\operatorname{Spec}(B))\to 0$, where the first map is defined by $1\to 1\cdot V(\mathfrak{p})$.  Note that $\mathrm{Cl}(\operatorname{Spec}(B))=0$ because $B$ is a unique factorization domain, then $\mathrm{Cl}(\operatorname{Spec}(\overline{A}))$ is generated by $V\left(\mathfrak{p}\right)$. On the other hand, $V\left(\mathfrak{p}\right)$ cannot be of finite order in $\mathrm{Cl}(\operatorname{Spec}(\overline{A}))$, because it is known \cite[Proposition~6]{Angermuller83} that if $\mathrm{Cl}(\operatorname{Spec}(\overline{A}))$ were a torsion group (i.e., $\overline{A}$ were almost factorial) then $A=B$, but we are assuming that $\overline{A}\neq B$. We have proved the assertion.
\end{itemize}
\end{proof}	
Suppose we are in the situation of Proposition \ref{prop_main_proposition}. Then the étale morphism $\mathbb{A}^{n}=\operatorname{Spec}(B)\xrightarrow{f} \mathbb{A}^{n}=\operatorname{Spec}(A)$ induced by the étale extension $A\subset B$ factorizes as $f=hg$, $\mathbb{A}^{n}=\operatorname{Spec}(B)\xhookrightarrow{g} \operatorname{Spec}(\overline{A}) \xrightarrow{h} \mathbb{A}^{n}=\operatorname{Spec}(A)$, where $g$ is an open immersion and $h$ is finite. We denote by $R_{h}$ the ramification divisor of the morphism $h$, since $f$ is étale, we have that $R_{h}\subset \operatorname{Spec}(\overline{A})\setminus\operatorname{Spec}(B)=V\left(\mathfrak{p}\right)$. By the purity of the branch locus it follows that $R_{h}=V(\mathfrak{p})$. Then we have proved:
\begin{corollary}\label{cor_ramification}
Assume we are in the situation of Proposition \ref{prop_main_proposition}. Then the ramification divisor $R_{h}\subset \operatorname{Spec}(\overline{A})$ of the finite morphism $\operatorname{Spec}(\overline{A}) \xrightarrow{h} \mathbb{A}^{n}=\operatorname{Spec}(A)$ coincides with the prime divisor $V\left(\mathfrak{p}\right)=\operatorname{Spec}(\overline{A})\setminus\operatorname{Spec}(B)$.
\end{corollary}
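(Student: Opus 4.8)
The plan is to read off $R_h$ from the factorization $f=hg$ and then pin it down with the purity of the branch locus, using Proposition \ref{prop_main_proposition} to guarantee that $h$ actually ramifies. As recorded in the proof of Proposition \ref{prop_main_proposition}, $\overline{A}$ is a finite $A$-module and is normal, so $h:\operatorname{Spec}(\overline{A})\to\mathbb{A}^{n}=\operatorname{Spec}(A)$ is a finite surjective morphism from a normal integral scheme to a regular one; since $\operatorname{char}k=0$ and $\mathrm{Frac}(A)\subset\mathrm{Frac}(\overline{A})=\mathrm{Frac}(B)$ is separable, $R_h$ is a genuine effective divisor. Note also that $V(\mathfrak{p})=\operatorname{Spec}(\overline{A})\setminus\operatorname{Spec}(B)\neq\emptyset$, since otherwise the open immersion $g$ would be surjective, hence an isomorphism, forcing $\overline{A}=B$.

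First I would show $R_h\subseteq V(\mathfrak{p})$. On the open set $g(\operatorname{Spec}(B))=\operatorname{Spec}(\overline{A})\setminus V(\mathfrak{p})$ the map $g$ is an isomorphism onto its image, so there $h$ agrees with $f\circ g^{-1}$, which is étale because $f$ is. Thus $h$ is unramified away from $V(\mathfrak{p})$, and its ramification locus lies in $V(\mathfrak{p})$. Now purity of the branch locus (Zariski–Nagata) applies to the finite surjective morphism $h$ from the normal scheme $\operatorname{Spec}(\overline{A})$ to the regular scheme $\mathbb{A}^{n}$: the non-étale locus is empty or of pure codimension one. Since $V(\mathfrak{p})$ is irreducible of codimension one, any nonempty pure-codimension-one closed subset of it equals $V(\mathfrak{p})$; hence it suffices to exclude $R_h=\emptyset$.

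Ruling out $R_h=\emptyset$ is the main obstacle and carries the only real content. Suppose $h$ were unramified everywhere. At each codimension-one point $P$ of $\mathbb{A}^{n}$ the local ring $A_{P}$ is a DVR, so the finite torsion-free module $\overline{A}\otimes_{A}A_{P}$ is free; thus $h$ is flat, hence étale, in codimension one. By purity in its form ``étale in codimension one implies étale'' (valid for $\operatorname{Spec}(\overline{A})$ normal and $\mathbb{A}^{n}$ regular), $h$ would be finite étale over $\mathbb{A}^{n}$. Since $\mathbb{A}^{n}$ over an algebraically closed field of characteristic zero is simply connected and $\operatorname{Spec}(\overline{A})$ is connected, $h$ would be an isomorphism, giving $\overline{A}=A$ and hence $\mathrm{Cl}(\operatorname{Spec}(\overline{A}))=\mathrm{Cl}(\mathbb{A}^{n})=0$, contradicting $\mathrm{Cl}(\operatorname{Spec}(\overline{A}))\cong\mathbb{Z}$ from Proposition \ref{prop_main_proposition}. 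Therefore $R_h\neq\emptyset$, and together with the previous paragraph this yields $R_h=V(\mathfrak{p})$.
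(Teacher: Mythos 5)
Your proof is correct and takes essentially the same route as the paper: the factorization $f=hg$ shows $h$ is étale on $g(\operatorname{Spec}(B))$, hence $R_{h}\subseteq V(\mathfrak{p})$, and Zariski--Nagata purity together with the irreducibility of the codimension-one set $V(\mathfrak{p})$ forces $R_{h}=\emptyset$ or $R_{h}=V(\mathfrak{p})$. The one difference is that you explicitly rule out $R_{h}=\emptyset$ (via flatness in codimension one, purity in the form ``étale in codimension one implies étale,'' simple connectedness of $\mathbb{A}^{n}$ in characteristic zero, and the contradiction with $\mathrm{Cl}(\operatorname{Spec}(\overline{A}))\cong\mathbb{Z}$), a step the paper leaves implicit, so your write-up is a more complete version of the same argument rather than a different one.
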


\begin{corollary}\label{cor_canonical_divisor}
Assume we are in the situation of Proposition \ref{prop_main_proposition}. Suppose $\operatorname{Spec}(\overline{A})$ is smooth and denote by $K_{\operatorname{Spec}(\overline{A})}$ its canonical divisor. Then $K_{\operatorname{Spec}(\overline{A})}=V\left(\mathfrak{p}\right)$, that is, the Picard group of $\operatorname{Spec}(\overline{A})$ is generated by the canonical class.
\end{corollary}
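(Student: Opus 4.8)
The plan is to read off $K_{\operatorname{Spec}(\overline{A})}$ from the Riemann--Hurwitz (canonical bundle) formula for the finite morphism $h$, using that the target $\mathbb{A}^{n}$ has trivial canonical class and that the ramification divisor of $h$ has already been identified in Corollary~\ref{cor_ramification}. Throughout I would write $X=\operatorname{Spec}(\overline{A})$ and $Y=\mathbb{A}^{n}=\operatorname{Spec}(A)$, so that $h\colon X\to Y$ is the finite morphism produced in the discussion following Proposition~\ref{prop_main_proposition}.

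First I would check that the canonical bundle formula is available. By hypothesis $X$ is smooth and $Y=\mathbb{A}^{n}$ is smooth, both of dimension $n$; since $X$ is smooth (hence Cohen--Macaulay), $Y$ is regular, and the finite morphism $h$ has zero-dimensional fibres with $\dim X=\dim Y$, miracle flatness (flatness over a regular base for a Cohen--Macaulay source with fibres of the expected dimension) shows that $h$ is flat. Because $A\subset B$ is étale and $\operatorname{char} k=0$, the function-field extension $\mathrm{Frac}(A)\subset\mathrm{Frac}(\overline{A})=\mathrm{Frac}(B)$ is finite separable, so $h$ is generically étale. Under these conditions the relative dualizing sheaf is $\omega_{X/Y}\cong\mathcal{O}_{X}(R_{h})$, and the relation $\omega_{X}\cong h^{*}\omega_{Y}\otimes\omega_{X/Y}$ becomes the divisor identity $K_{X}=h^{*}K_{Y}+R_{h}$ in $\mathrm{Cl}(X)=\mathrm{Pic}(X)$, these groups coinciding because $X$ is smooth.

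Then I would evaluate the three terms. The canonical sheaf of $\mathbb{A}^{n}$ is free, generated by $dy_{1}\wedge\cdots\wedge dy_{n}$, whence $K_{Y}=0$ and $h^{*}K_{Y}=0$. By Corollary~\ref{cor_ramification} we have $R_{h}=V(\mathfrak{p})$. Substituting gives $K_{X}=V(\mathfrak{p})$. Finally, Proposition~\ref{prop_main_proposition} yields $\mathrm{Pic}(X)=\mathrm{Cl}(X)\cong\mathbb{Z}\cong\langle V(\mathfrak{p})\rangle$, so $K_{X}=V(\mathfrak{p})$ is precisely the generator and the Picard group is generated by the canonical class, as claimed.

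The subtle point, and the one I would be most careful about, is the multiplicity with which $V(\mathfrak{p})$ occurs in $R_{h}$: the Riemann--Hurwitz formula uses the ramification divisor with its full scheme-theoretic multiplicity (in the tame, characteristic-zero case $R_{h}=\sum_{D}(e_{D}-1)D$), whereas purity of the branch locus only pins down its support. The entire force of the statement is thus carried by Corollary~\ref{cor_ramification}, which records $R_{h}=V(\mathfrak{p})$ as a \emph{prime} divisor, i.e.\ with multiplicity one; this is exactly what makes $K_{X}$ a generator rather than a proper multiple $(e-1)V(\mathfrak{p})$ of the generator of $\mathbb{Z}\cong\mathrm{Cl}(X)$. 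Granting that corollary, the present argument is a direct application of Riemann--Hurwitz together with the vanishing $K_{\mathbb{A}^{n}}=0$.
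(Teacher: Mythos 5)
Your proof is correct and takes essentially the same route as the paper: the paper likewise applies the ramification (Riemann--Hurwitz) formula $K_{\operatorname{Spec}(\overline{A})}=h^{*}(K_{\mathbb{A}^{n}})+R_{h}$ (citing Iitaka's Theorem 5.5) to the finite morphism $h$, uses $K_{\mathbb{A}^{n}}=0$, and concludes from Corollary \ref{cor_ramification} and Proposition \ref{prop_main_proposition}. Your additional verifications (flatness via miracle flatness, generic \'etaleness) and your remark that the force of the statement rests on Corollary \ref{cor_ramification} giving $R_{h}=V(\mathfrak{p})$ with multiplicity one are sound refinements of the same argument, not a different approach.
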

\begin{proof}
We have that $K_{\operatorname{Spec}(\overline{A})}=h^{*}(K_{\mathbb{A}^{2}})+ R_{h}$ by the ramification formula \cite[Theorem 5.5]{Iitaka82} applied to the dominant morphism $\operatorname{Spec}(\overline{A}) \xrightarrow{h} \mathbb{A}^{2}=\operatorname{Spec}(A)$. Then $K_{\operatorname{Spec}(\overline{A})}=R_{h}$. The conclusion now follows from Corollary \ref{cor_ramification}.	
\end{proof}

\section{Two-dimensional étale extensions}\label{sec_dimension_two}
	
In this section $k=\mathbb{C}$, the field of complex numbers. We are going to apply Proposition \ref{prop_main_proposition} to carry out Wright's geometric approach for two-dimensional non-integral étale extensions \cite{Wright97} in the case $\overline{A}$ is a primary $\overline{A}$-submodule of $B$. Wright proved that if we start with an étale extension $A=\mathbb{C}[p,q]\subset B=\mathbb{C}[x,y]$ that is not integral, it is possible to construct a normal affine variety that admits the structure of an $\mathbb{A}^{1}$-fibration over $\mathbb{P}^{1}$, more precisely: 

\begin{theorem}\cite[Theorem 4.3]{Wright97}\label{thm_Wright_bundle}
	Let $A=\mathbb{C}[p,q]\subset B=\mathbb{C}[x,y]$ be an \'{e}tale extension such that $\overline{A}\neq B$, then there exist a normal affine variety $V$ containing $U=\mathbb{A}^{2}$ as an open subvariety having the following properties:
	\begin{itemize}
		\item[i)] $F=V\setminus U$ is a rational curve whose normalization is $\mathbb{A}^{1}$ and each singular point of $F$ has one-point desingularization.
		\item[ii)]there is a map $\pi: V\to \mathbb{P}^{1}$ such that $F$ is the set-theoretic fiber of a point $z\in\mathbb{P}^{1}$, and the restriction map $\pi\vert_{U}:U\to \mathbb{P}^{1}\setminus\{z\}=\mathbb{A}^{1}$ is the projection onto a coordinate line.
		\item[iii)] there is a map $f: V\to \mathbb{A}^{2}$ such that $f\vert_{U}$ is étale.  
	\end{itemize}
	Moreover, if we assume that $\overline{A}$ is smooth, then $V$ can be chosen to be smooth and $F\cong\mathbb{A}^{1}$. 	Even more, \cite[Remark p. 605]{Wright97} if $F$ has multiplicity one in the fiber, that is, $\pi^{-1}(z)=\mathbb{A}^{1}$ scheme-theoretically, then $V$ is an $\mathbb{A}^{1}$-bundle over $\mathbb{P}^{1}$ via the map $\pi: V\to \mathbb{P}^{1}$.
\end{theorem}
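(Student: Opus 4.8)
The plan is to take $V:=\operatorname{Spec}(\overline{A})$ and to extract the fibration from the theory of affine surfaces carrying an $\mathbb{A}^{1}$-fibration. A good part of the statement is essentially formal. Exactly as in the proof of Proposition \ref{prop_main_proposition}, étaleness makes $A\subset B$ quasi-finite and $\overline{A}$ a finite $A$-module, so by Zariski's Main Theorem the morphism $\operatorname{Spec}(B)\to\operatorname{Spec}(\overline{A})$ is an open immersion and $\overline{A}$ is a finitely generated normal $\mathbb{C}$-algebra. Thus $V$ is a normal affine surface containing $U=\mathbb{A}^{2}=\operatorname{Spec}(B)$ as an open subvariety, and the finite morphism $h\colon V\to\mathbb{A}^{2}=\operatorname{Spec}(A)$ restricts on $U$ to $f$; this gives (iii), with $f|_{U}$ étale. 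Since $\overline{A}\neq B$, the complement $F=V\setminus U$ is a nonempty curve, and since $f|_{U}$ is unramified the ramification locus of $h$ is contained in $F$.

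For the fibration I would first secure its mere existence. Because $U\cong\mathbb{A}^{2}$ embeds in the (normal) surface $V$ as an open subset, and logarithmic Kodaira dimension does not increase on passing to the larger surface, $\overline{\kappa}(V)\le\overline{\kappa}(\mathbb{A}^{2})=-\infty$, so $V$ is a rational affine surface with $\overline{\kappa}(V)=-\infty$. Miyanishi's structure theory for such surfaces then furnishes an $\mathbb{A}^{1}$-fibration $\pi\colon V\to B$ onto a smooth curve, with general fiber $\mathbb{A}^{1}$; the task is to show that, after choosing coordinates, $\pi$ is the fibration demanded by the theorem.

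To obtain (ii) and (i) I would analyze $\pi$ on a smooth completion $(\overline{V},D)$ of $V$, with $D$ a reduced simple normal crossing divisor of rational curves, extend $\pi$ to a $\mathbb{P}^{1}$-fibration $\overline{\pi}\colon\overline{V}'\to\mathbb{P}^{1}$ after resolving base points, and read off the fiber structure from the dual graph of $D$. In the favorable case this shows $F$ is a single fiber of $\pi$ lying over one point $z$, so that $B=\mathbb{P}^{1}$, $F=\pi^{-1}(z)$ set-theoretically, and $\pi|_{U}\colon\mathbb{A}^{2}\to\mathbb{P}^{1}\setminus\{z\}=\mathbb{A}^{1}$ is, after an automorphism of $U$, a projection onto a coordinate line; and it shows that $F$, being the reduced support of a degenerate fiber, is a rational curve with normalization $\mathbb{A}^{1}$ whose singularities are unibranch (one-point desingularization), the last point coming from the closure of $F$ meeting $D$ along a single linear chain. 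The refinements are then formal: if $\overline{A}$ is smooth, $V$ is smooth, the chain contracts and $F\cong\mathbb{A}^{1}$; and if $F$ has multiplicity one, i.e.\ $\pi^{-1}(z)=\mathbb{A}^{1}$ scheme-theoretically, then $\pi$ has no multiple fibers and $\pi\colon V\to\mathbb{P}^{1}$ is a genuine $\mathbb{A}^{1}$-bundle.

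The main obstacle is exactly the reduction to the favorable case above. A plane can carry $\mathbb{A}^{1}$-fibrations with horizontal boundary and degenerate fibers (for instance $(x,y)\mapsto xy$ has $\mathbb{C}^{*}$ as general fiber), so neither the verticality of $F$, nor its connectedness and unibranch shape, nor the absence of degenerate fibers of $\pi|_{U}$, can be deduced from the bare existence of the fibration. Establishing them requires controlling the dual graph of $D$ under the blow-ups and contractions, and the decisive input is the étale hypothesis: since the ramification of $h$ is confined to $F$, only highly restricted boundary configurations survive. Turning this ramification constraint into admissible dual graphs — so that $F$ becomes a single unibranch fiber and $\pi|_{U}$ a coordinate projection — is the technical heart of Wright's argument and the place where the two-dimensional étale input is genuinely used.
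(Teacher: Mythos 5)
Your proposal is not a proof, and you say so yourself: the entire content of items (i) and (ii) --- that $F$ is vertical for $\pi$, irreducible, a single set-theoretic fiber with unibranch singularities, and that $\pi\vert_{U}$ becomes a coordinate projection with no degenerate fibers --- is deferred to ``controlling the dual graph of $D$ under the blow-ups and contractions,'' which you never carry out. A sketch that names its decisive step as an open obstacle has a genuine gap at exactly that step. For the record, the paper does not prove this statement either; it is quoted from Wright, and Wright's actual argument is of a different shape: he extends the étale map $f=(p,q)$ to a birational map of projective planes, resolves it, and constructs $V$ inside that resolution --- which is why he needs the normalization hypotheses on the points at infinity of $p=0$ and $q=0$ recorded in Remark \ref{rmk_Wright_assumption}, hypotheses your sketch never confronts. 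So your route (log Kodaira dimension $-\infty$ plus Miyanishi's structure theory) is genuinely different from Wright's, but it stalls before reaching any of the claimed conclusions.

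There is also a concrete error in your setup: taking $V:=\operatorname{Spec}(\overline{A})$ cannot work in the stated generality. This theorem assumes only $\overline{A}\neq B$, \emph{not} that $\overline{A}$ is a primary $\overline{A}$-submodule of $B$. Without that hypothesis the complement $\operatorname{Spec}(\overline{A})\setminus\operatorname{Spec}(B)$ may have several irreducible components: Rangachev's Theorem \ref{Rangachev_theorem}(iii) allows $r>1$ height-one associated primes, and Gurjar--Miyanishi (Theorem \ref{thm-Gurjar_Miyanishi}) yields a \emph{disjoint union} of curves $C_{i}\cong\mathbb{A}^{1}$. Hence for your $V$ the assertion (i), that $F=V\setminus U$ is \emph{a} rational curve, is in general false or at least unprovable, and no $\mathbb{A}^{1}$-fibration can have a disconnected set-theoretic fiber. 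It is precisely the primary-submodule hypothesis, via Proposition \ref{prop_main_proposition}, that makes $\operatorname{Spec}(\overline{A})\setminus\operatorname{Spec}(B)$ a single prime divisor and thereby legitimizes the identification $V=\operatorname{Spec}(\overline{A})$; this is what the paper exploits later, in Theorem \ref{thm_p1_bundle}, where that extra hypothesis is available. Wright's $V$, by contrast, is a normal affine variety built so that $V\setminus U$ is irreducible by construction, and need not coincide with $\operatorname{Spec}(\overline{A})$, since (iii) requires only a morphism $f:V\to\mathbb{A}^{2}$ étale on $U$, not a finite one.
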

	
In order to get an $\mathbb{A}^{1}$-bundle over $\mathbb{P}^{1}$ we first need to prove that the integral closure is smooth. We are going to show that this is the case if $\overline{A}$ is a primary $\overline{A}$-submodule of $B$. To achieve this we apply the following result of Gurjar and Miyanishi. In fact, the specific case we need from this result, that is, when $X=\mathbb{A}^{2}$, was proved by Miyanishi in \cite[Chap. I, Sect. 6]{Miyanishi_81}.

\begin{theorem}{\cite[Theorem~1]{Gurjar-Miyanishi-08}}\label{thm-Gurjar_Miyanishi}
Let $\varphi: X_{u}\to X_{l}$ be an \'{e}tale morphism, where $X_{u}=X$ and $X_{l}=X$ are two copies of an irreducible normal affine surface $X$ defined over $\mathbb{C}$. Let $\widetilde{X}$ be the normalization of $X_{l}$ in the function field of $X_{u}$. Then $\widetilde{X}\setminus X_{u}$ is a disjoint union of irreducible curves $C_{i}$, each of which is isomorphic to $\mathbb{A}^{1}$. Further, each singularity of $\widetilde{X}$ which is not contained in $X_{u}$ is a cyclic quotient singularity. Any irreducible component of $\widetilde{X}\setminus X_{u}$ contains at most one such singular point.
\end{theorem}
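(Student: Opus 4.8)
The plan is to deploy the structure theory of open algebraic surfaces, whose governing invariant is the logarithmic Kodaira dimension $\overline{\kappa}$. Since $\varphi$ is étale it is quasi-finite, so Zariski's Main Theorem realizes $X_{u}$ as an open subvariety of $\widetilde{X}$ with the induced morphism $\psi:\widetilde{X}\to X_{l}$ finite. A first reduction shows that $D:=\widetilde{X}\setminus X_{u}$ is of pure codimension one---a curve---using that $\widetilde{X}$ is normal, that $\psi$ is étale over $X_{u}$, and purity of the branch locus. The task then becomes describing the irreducible components $C_{i}$ of $D$ together with the singularities of $\widetilde{X}$ supported on them.

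Next I would pass to a smooth setting. Let $\sigma:\widehat{X}\to\widetilde{X}$ be the minimal resolution and fix a smooth projective completion $\overline{V}\supset\widehat{X}$ whose boundary $\Delta$---the curves at infinity together with the $\sigma$-exceptional locus---is a reduced simple normal crossing divisor. The point of working with the pair $(\overline{V},\Delta)$ is that it is amenable to the Miyanishi-Sugie-Fujita classification and to the logarithmic ramification formula. Because $X_{u}$ and $X_{l}$ are two copies of the same surface $X$, one has $\overline{\kappa}(X_{u})=\overline{\kappa}(X_{l})=\overline{\kappa}(X)$, and comparing this with $\widetilde{X}$ through the finite morphism $\psi$---whose ramification is confined to $D$---pins down the numerical type of $(\overline{V},\Delta)$.

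Then I would argue component by component. In the case relevant to the application, $X=\mathbb{A}^{2}$ and $\overline{\kappa}(X)=-\infty$, so $\widetilde{X}$ carries an $\mathbb{A}^{1}$-fibration $\rho:\widetilde{X}\to C$. Tracking how $D$ meets the fibers of $\rho$ shows that each $C_{i}$ is rational and meets the general fiber in a single point, hence has exactly one place at infinity and normalization $\mathbb{A}^{1}$, giving $C_{i}\cong\mathbb{A}^{1}$. The singularities of $\widetilde{X}$ lying off $X_{u}$ come from contracting the $\sigma$-exceptional curves, and the étale hypothesis, fed through the log ramification formula, forces every exceptional configuration met by a boundary curve to be a \emph{linear} chain of rational curves; contracting such an admissible chain produces precisely a cyclic quotient (Hirzebruch-Jung) singularity, and the fibration leaves room for at most one such chain per component, yielding the last two assertions. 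The general normal $X$ is handled by the same bookkeeping, now splitting into the cases $\overline{\kappa}(X)\in\{-\infty,0,1,2\}$ and invoking the behaviour of $\overline{\kappa}$ under the finite, codimension-one-étale cover $\psi$.

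The hard part is this third step: extracting from étaleness the exact combinatorial constraints that each $C_{i}$ is rational with a single place at infinity and that each exceptional chain is \emph{linear} rather than a fork. This is where the logarithmic ramification formula, the numerical theory of the $\mathbb{A}^{1}$-fibration (fiber multiplicities and the Zariski-type decomposition of $K_{\overline{V}}+\Delta$), and the classification of admissible dual graphs must be combined; ruling out fork configurations and multi-place or positive-genus boundary curves is the delicate core of the argument.
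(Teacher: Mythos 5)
Before anything else, note that the paper contains no proof of this statement to compare against: Theorem \ref{thm-Gurjar_Miyanishi} is imported verbatim from Gurjar--Miyanishi \cite{Gurjar-Miyanishi-08}, and the special case $X=\mathbb{A}^{2}$ (the only case the paper actually uses) is attributed to Miyanishi \cite[Chap.~I, Sect.~6]{Miyanishi_81}. So your proposal can only be judged as a standalone proof of the cited result, and as such it has a genuine gap: everything the theorem asserts --- that each $C_{i}\cong\mathbb{A}^{1}$, that the singularities off $X_{u}$ are cyclic quotient singularities, and that each component carries at most one of them --- is concentrated in your third step, which you yourself label the ``delicate core'' and do not carry out. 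Saying that the log ramification formula, the numerical theory of $\mathbb{A}^{1}$-fibrations, and the classification of admissible dual graphs ``must be combined'' to rule out forks, positive-genus components, and multi-place boundary curves is a restatement of the problem, not an argument; no step of the sketch actually produces the constraint that an exceptional configuration is a linear chain, which is the whole content of the cyclic-quotient assertion.

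Two concrete points within the sketch also need repair. First, your claim that each $C_{i}$ meets the general fiber of the $\mathbb{A}^{1}$-fibration $\rho$ in a single point (i.e., is horizontal) points the wrong way: in the situation of interest the boundary curves are vertical. In the paper's application (Theorem \ref{thm_p1_bundle}) the boundary $\operatorname{Spec}(\overline{A})\setminus\operatorname{Spec}(B)$ is exactly the fiber at infinity of the $\mathbb{A}^{1}$-bundle over $\mathbb{P}^{1}$. If a boundary component were horizontal, then $\rho$ restricted to $X_{u}$ would be a $\mathbb{C}^{*}$-fibration rather than an $\mathbb{A}^{1}$-fibration; moreover, even granting horizontality, ``degree one over the base'' gives rationality but not a single place at infinity, so $C_{i}\cong\mathbb{A}^{1}$ does not follow. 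The standard route (this is how Miyanishi treats $X=\mathbb{A}^{2}$) is the opposite: one shows the boundary components are components of fibers of an extended $\mathbb{A}^{1}$-fibration and then invokes the structure theorem for degenerate fibers of $\mathbb{A}^{1}$-fibrations on normal affine surfaces, which yields simultaneously that each component is $\mathbb{A}^{1}$ and that it contains at most one cyclic quotient singularity. Second, purity of the branch locus is not what makes $D=\widetilde{X}\setminus X_{u}$ pure of codimension one; the relevant fact is that $\widetilde{X}$ is affine (being finite over the affine $X_{l}$) and the complement of a dense affine open subset of a normal variety is pure of codimension one. Purity of the branch locus is instead what confines the ramification of $\psi$ to $D$.
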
 	
	
Consider now an \'{e}tale extension $A=\mathbb{C}[p,q]\subset B=\mathbb{C}[x,y]$, or equivalently, the corresponding \'{e}tale morphism $f=(p,q):\mathbb{A}^{2}=\operatorname{Spec}(B)\to \mathbb{A}^{2}=\operatorname{Spec}(A)$. Suppose that $\overline{A}\neq B$ and that $\overline{A}$ is a primary $\overline{A}$-submodule of $B$, by Proposition \ref{prop_main_proposition} we have that $\operatorname{Spec}(\overline{A})\setminus\operatorname{Spec}(B)=V\left(\mathfrak{p}\right)$ is reduced and irreducible. Applying the Gurjar-Miyanishi's Theorem \ref{thm-Gurjar_Miyanishi} to the morphism $f=(p,q):\mathbb{A}^{2}=\operatorname{Spec}(B)\to \mathbb{A}^{2}=\operatorname{Spec}(A)$ implies that $\operatorname{Spec}(\overline{A})\setminus\operatorname{Spec}(B)\cong\mathbb{A}^1$ scheme-theoretically, and consequently $\operatorname{Spec}(\overline{A})$ has no singularities at all. We have proved the following:
	
\begin{proposition}\label{prop-smooth_integral_closure}
Let  $A=\mathbb{C}[p,q]\subset B=\mathbb{C}[x,y]$ be an \'{e}tale extension of polynomial rings. Denote by $\overline{A}$ the integral closure of $A$ in $B$. Suppose that $\overline{A}$ is a primary $\overline{A}$-submodule of $B$. Then $\overline{A}$ is a smooth $\mathbb{C}$-algebra.
\end{proposition}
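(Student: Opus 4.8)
The plan is to recognize $\operatorname{Spec}(\overline{A})$ as the object governed by Theorem \ref{thm-Gurjar_Miyanishi}, to use Proposition \ref{prop_main_proposition} to cut the boundary down to a single reduced affine line, and then to exclude the one quotient singularity that the general theorem still permits.

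First I would note that a primary submodule is by definition proper, so the hypothesis that $\overline{A}$ is a primary $\overline{A}$-submodule of $B$ already forces $\overline{A}\neq B$; hence Proposition \ref{prop_main_proposition} applies. Next I would identify $\operatorname{Spec}(\overline{A})$ with the normalization appearing in Theorem \ref{thm-Gurjar_Miyanishi}. Since $B=\mathbb{C}[x,y]$ is integrally closed in its fraction field, any element of $\operatorname{Frac}(B)$ integral over $A$ is integral over $B$ and so lies in $B$; thus the integral closure of $A$ in $B$ equals the integral closure of $A$ in $\operatorname{Frac}(B)$. Therefore $\operatorname{Spec}(\overline{A})$ is precisely the normalization $\widetilde{X}$ of $X_{l}=\operatorname{Spec}(A)=\mathbb{A}^{2}$ in the function field $\operatorname{Frac}(B)$ of $X_{u}=\operatorname{Spec}(B)=\mathbb{A}^{2}$, with $X=\mathbb{A}^{2}$ normal and $\varphi=f=(p,q)$ the given étale morphism; by Zariski's Main Theorem $X_{u}$ sits inside $\widetilde{X}$ as the open immersion already used in Proposition \ref{prop_main_proposition}.

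Applying Theorem \ref{thm-Gurjar_Miyanishi} (in Miyanishi's special case $X=\mathbb{A}^{2}$) I would record that $\operatorname{Spec}(\overline{A})\setminus\operatorname{Spec}(B)$ is a disjoint union of curves each isomorphic to $\mathbb{A}^{1}$, that every singular point of $\operatorname{Spec}(\overline{A})$ lies on this boundary (because $\operatorname{Spec}(B)=\mathbb{A}^{2}$ is smooth) and is a cyclic quotient singularity, and that each boundary component carries at most one such point. I then invoke Proposition \ref{prop_main_proposition}: under the primary-submodule hypothesis the boundary is the single irreducible, reduced divisor $V(\mathfrak{p})$. Combining these, the boundary is one copy of $\mathbb{A}^{1}$ and $\operatorname{Spec}(\overline{A})$ has at most one singular point, a cyclic quotient singularity $P\in V(\mathfrak{p})$.

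Excluding $P$ is the heart of the matter and the step I expect to be the main obstacle, since the general theorem genuinely allows one quotient singularity per boundary component, and the torsion-free class group $\operatorname{Cl}(\operatorname{Spec}(\overline{A}))\cong\mathbb{Z}$ alone does \emph{not} suffice (the local class group at $P$ is a quotient of it, so its torsion is not obstructed). The reducedness and irreducibility of $V(\mathfrak{p})$ must instead be used to show that $V(\mathfrak{p})$ is locally principal at $P$, cut out by a single nonzerodivisor $f\in\mathcal{O}_{\operatorname{Spec}(\overline{A}),P}$. Granting this, the standard local criterion---if $(R,\mathfrak{m})$ is Noetherian local, $f\in\mathfrak{m}$ a nonzerodivisor, and $R/fR$ regular, then $R$ is regular---promotes the smoothness of the curve $V(\mathfrak{p})\cong\mathbb{A}^{1}$ at $P$ to regularity of the surface there, forcing smoothness along all of $V(\mathfrak{p})$ and hence everywhere. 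Establishing that a single reduced affine-line boundary cannot support a genuine cyclic quotient singularity---equivalently, that the scheme-theoretic boundary has multiplicity one and is Cartier at $P$---is exactly where Miyanishi's finer analysis of the $X=\mathbb{A}^{2}$ case is needed. Once $P$ is ruled out, $\operatorname{Spec}(\overline{A})$ is regular, and since $\overline{A}$ is a finitely generated algebra over the field $\mathbb{C}$ of characteristic zero, regularity is equivalent to smoothness, giving the claim.
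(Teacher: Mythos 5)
Your proposal takes essentially the same route as the paper: Proposition \ref{prop_main_proposition} reduces the boundary to the single irreducible divisor $V(\mathfrak{p})$, and then Theorem \ref{thm-Gurjar_Miyanishi} is applied to the \'etale morphism $f=(p,q)$. Your two preliminary observations---that a primary submodule is proper by definition, so $\overline{A}\neq B$ comes for free, and that $\operatorname{Spec}(\overline{A})$ really is the normalization $\widetilde{X}$ of Theorem \ref{thm-Gurjar_Miyanishi} because $B$ is integrally closed in $\operatorname{Frac}(B)$---are both correct, and both are used tacitly in the paper without comment.

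The step you isolate as the ``heart of the matter'' is exactly the point where the paper's own proof is also silent: the paper passes from ``$\operatorname{Spec}(\overline{A})\setminus\operatorname{Spec}(B)\cong\mathbb{A}^1$ scheme-theoretically'' to ``consequently $\operatorname{Spec}(\overline{A})$ has no singularities at all'' with no further argument. Your diagnosis of why this is nontrivial is accurate: Theorem \ref{thm-Gurjar_Miyanishi} as quoted permits one cyclic quotient singularity on the unique boundary component, and $\mathrm{Cl}(\operatorname{Spec}(\overline{A}))\cong\mathbb{Z}$ cannot exclude it, since the local class group at such a point is a finite cyclic quotient of the global one and $\mathbb{Z}$ admits such quotients. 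The mechanism you sketch for closing the gap is also the right one: if the boundary occurs with multiplicity one as a fiber of the $\mathbb{A}^{1}$-fibration, then being an entire fiber it is the pullback of a Cartier divisor from $\mathbb{P}^{1}$, hence Cartier, and the local criterion (``$f$ a nonzerodivisor with $R/fR$ regular implies $R$ regular'') upgrades smoothness of $V(\mathfrak{p})\cong\mathbb{A}^{1}$ to regularity of the surface along it. What is genuinely missing---from your write-up and equally from the paper's---is the multiplicity-one statement itself, which must come from Miyanishi's finer analysis of $\mathbb{A}^{1}$-fibrations on normal affine surfaces (the source the paper points to, Chap.~I, Sect.~6 of Miyanishi's book: a fiber component through a cyclic quotient singularity occurs with multiplicity greater than one), and not from Theorem \ref{thm-Gurjar_Miyanishi} as stated; note also that any such completion must set up the fibration on the normal surface \emph{before} smoothness is known, so it cannot borrow from Theorem \ref{thm_p1_bundle}, which comes later and relies on this proposition. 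In short: same approach, nothing you wrote is wrong, and the obstacle you flag is real---but be aware that the paper asserts rather than proves that step, so neither text contains a self-contained exclusion of the quotient singularity.
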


Under the hypothesis that $\overline{A}$ is a primary $\overline{A}$-submodule of $B$ it follows now from Wright's construction that the variety $V$ in Wright's Theorem \ref{thm_Wright_bundle} is exactly $\operatorname{Spec}(\overline{A})$, 
and it admits a map to $\mathbb{P}^{1}$ making it an $\mathbb{A}^{1}$-bundle except over one point of $\mathbb{P}^{1}$ whose fiber is exactly $F=\operatorname{Spec}(\overline{A})\setminus\operatorname{Spec}(B)\cong\mathbb{A}^{1}$, now scheme-theoretically.  Therefore, we have proved that $\operatorname{Spec}(\overline{A})$ has the structure of an $\mathbb{A}^{1}$-bundle over $\mathbb{P}^{1}$, such that the open embedding of $\operatorname{Spec}(B)\cong \mathbb{A}^{2}$ in $\operatorname{Spec}(\overline{A})$ coincides with the complement of the fiber of $\pi$ over the infinity and $\pi\vert_{\operatorname{Spec}(B)}:\mathbb{A}^{2}\to \mathbb{P}^{1}\setminus\{\infty\}=\mathbb{A}^{1}$ is one of the standard projections. Nevertheless, we must make the following observation:

\begin{remark}\label{rmk_Wright_assumption}
In his Theorem \ref{thm_Wright_bundle}, Wright assumed that the curves $p=0$ and $q=0$  each have two points at infinity in $\mathbb{P}^{2}$ that coincide with the points at infinity on the lines $x=0$ and $y=0$. He need this in his proof to accommodate the variety $V$ within the blowups performed in the resolution of the birational map associated to the étale extension. It is always possible to make such an assumption at the cost of modifying the original étale extension. Its means that if we start with an arbitrary non-integral \'{e}tale extension $A\subset B$, then we probably need to change to a different non-integral étale extension $A'\subset B'$ by means of an automorphism, if we want to construct $V$ from $\operatorname{Spec}(\overline{A'})$ as Wright does. 
\end{remark}

Let's prove using Proposition \ref{prop_main_proposition} that when $\overline{A}$ is a primary $\overline{A}$-submodule of $B$, we don't need to make Wright's assumptions. Proving it this way will be useful later to explicitly describe the integral closure and to prove Theorem \ref{thm-etale_implies_integral}.

\begin{theorem}\label{thm_p1_bundle}
Let $A=\mathbb{C}[p,q]\subset B=\mathbb{C}[x,y]$ be an \'{e}tale extension such that $\overline{A}\neq B$ and $\overline{A}$ is a primary $\overline{A}$-submodule of $B$, then $\operatorname{Spec}(\overline{A})$ is a smooth affine surface that admits the structure of an $\mathbb{A}^{1}$-bundle over $\mathbb{P}^{1}$, $\pi: \operatorname{Spec}(\overline{A})\to \mathbb{P}^{1}$ such that the open embedding of $\operatorname{Spec}(B)\cong \mathbb{A}^{2}$ in $\operatorname{Spec}(\overline{A})$ coincides with the complement of the fiber of $\pi$ over the infinity and $\pi\vert_{\operatorname{Spec}(B)}:\mathbb{A}^{2}\to \mathbb{P}^{1}\setminus\{\infty\}=\mathbb{A}^{1}$ is one of the standard projections.
\end{theorem}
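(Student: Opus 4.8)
The plan is to produce the fibration \emph{intrinsically}, so that I never have to realize $\operatorname{Spec}(\overline{A})$ inside one particular completion the way Wright does, and therefore never need to normalize the points at infinity as in Remark \ref{rmk_Wright_assumption}. Write $X=\operatorname{Spec}(\overline{A})$, let $U=\operatorname{Spec}(B)\cong\mathbb{A}^{2}$ be the open immersion of Proposition \ref{prop_main_proposition}, and set $F=X\setminus U$. By Proposition \ref{prop_main_proposition} the curve $F=V(\mathfrak{p})$ is irreducible and generates $\operatorname{Cl}(X)\cong\mathbb{Z}$; by Proposition \ref{prop-smooth_integral_closure} $X$ is smooth, so $\operatorname{Cl}(X)=\operatorname{Pic}(X)$; and by the Gurjar--Miyanishi Theorem \ref{thm-Gurjar_Miyanishi} one has $F\cong\mathbb{A}^{1}$ scheme-theoretically, i.e. $F$ is reduced. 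The first observation is that it suffices to exhibit a morphism $\pi\colon X\to\mathbb{P}^{1}$ whose restriction to $U$ is a coordinate projection $\mathbb{A}^{2}\to\mathbb{A}^{1}=\mathbb{P}^{1}\setminus\{\infty\}$ and for which $\pi^{-1}(\infty)=F$. Indeed, over $\mathbb{P}^{1}\setminus\{\infty\}$ the fibration is then the trivial bundle $U=\mathbb{A}^{1}\times\mathbb{A}^{1}$, while the remaining fibre is the reduced affine line $F$; since every fibre is then a reduced $\mathbb{A}^{1}$ over a smooth base with smooth total space, $\pi$ is an $\mathbb{A}^{1}$-bundle.

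To build such a $\pi$ I would first check that $\overline{\kappa}(X)=-\infty$. As $U\hookrightarrow X$ is an open immersion of smooth surfaces, logarithmic Kodaira dimension is monotone, $\overline{\kappa}(X)\le\overline{\kappa}(U)=\overline{\kappa}(\mathbb{A}^{2})=-\infty$, forcing $\overline{\kappa}(X)=-\infty$. Miyanishi's characterization of smooth affine surfaces of logarithmic Kodaira dimension $-\infty$ (the same circle of ideas already invoked through Theorem \ref{thm-Gurjar_Miyanishi}) then furnishes an $\mathbb{A}^{1}$-fibration $\rho\colon X\to C$ onto a smooth curve $C$. What remains is to match this abstract fibration with the geometry of the pair $(X,F)$.

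The heart of the argument, and the step that replaces Wright's assumption, is to show that $F$ is a reduced full fibre of $\rho$ and that $C\cong\mathbb{P}^{1}$. I would first rule out that $F$ is horizontal (dominant over $C$): if it were, the general fibre of $\rho|_{U}$ would be an affine line with at least one puncture. Two or more punctures give a general fibre of logarithmic Kodaira dimension $1$, which by subadditivity would force $\overline{\kappa}(\mathbb{A}^{2})\ge 0$, a contradiction; a single puncture makes $F$ a section, forcing $X$ to be birationally over $C$ a product $C\times\mathbb{A}^{1}$, incompatible with $X$ affine and $\operatorname{Pic}(X)\cong\mathbb{Z}$ (such a product is non-affine when $C=\mathbb{P}^{1}$ and has vanishing Picard group when $C$ is a rational affine curve). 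Hence $F$ is vertical, say $\rho(F)=\{z\}$. Restricting to $U=X\setminus F=\mathbb{A}^{2}$ gives an $\mathbb{A}^{1}$-fibration of the plane, and since $\operatorname{Pic}(\mathbb{A}^{2})=0$ it can have no degenerate fibre; thus $F$ is the entire fibre over $z$, no fibre component lies in $U$ over $z$, and the base of this fibration of $\mathbb{A}^{2}$ is $\mathbb{A}^{1}$. Therefore $C\setminus\{z\}\cong\mathbb{A}^{1}$, so $C\cong\mathbb{P}^{1}$ with $z=\infty$, and $\rho|_{U}\colon\mathbb{A}^{2}\to\mathbb{A}^{1}$ is a degenerate-fibre-free $\mathbb{A}^{1}$-fibration, hence an $\mathbb{A}^{1}$-bundle over $\mathbb{A}^{1}$ and so (no $H^{1}$) trivial, i.e. a standard coordinate projection. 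Since $F\cong\mathbb{A}^{1}$ is reduced, the fibre $\rho^{-1}(\infty)=F$ has multiplicity one, and by the reduction of the first paragraph $\rho=\pi$ is the asserted $\mathbb{A}^{1}$-bundle.

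I expect the genuine difficulty to be exactly this verticality-and-base analysis, precisely the place where Wright needed his compactification and hence his hypothesis on the points at infinity of $p=0$ and $q=0$ (Remark \ref{rmk_Wright_assumption}). Here it must instead be extracted purely from the intrinsic inputs $F\cong\mathbb{A}^{1}$ reduced (Theorem \ref{thm-Gurjar_Miyanishi}), $\operatorname{Pic}(X)=\operatorname{Cl}(X)\cong\mathbb{Z}=\langle[F]\rangle$ (Proposition \ref{prop_main_proposition}), and smoothness (Proposition \ref{prop-smooth_integral_closure}). The relation $K_{X}=[F]$ of Corollary \ref{cor_canonical_divisor} is the decisive safeguard: should the Picard-rank bookkeeping ever leave an ambiguous degenerate fibre, the adjunction computation fixing the canonical class pins down its class in $\operatorname{Cl}(X)$ and excludes it. With the base identified as $\mathbb{P}^{1}$, every fibre reduced and isomorphic to $\mathbb{A}^{1}$, and the restriction to $\operatorname{Spec}(B)$ a standard projection, this yields the theorem without invoking Wright's assumption, and records the explicit projection and the fibre $F$ at infinity that the later, coordinate-level description of $\overline{A}$ will exploit.
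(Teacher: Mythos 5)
Your opening moves are fine: the reduction (all fibres reduced $\mathbb{A}^{1}$ plus smoothness implies bundle), the inequality $\overline{\kappa}(\operatorname{Spec}(\overline{A}))\le\overline{\kappa}(\mathbb{A}^{2})=-\infty$, and the appeal to Miyanishi--Sugie to obtain \emph{some} $\mathbb{A}^{1}$-fibration $\rho\colon X\to C$. The fatal gap is precisely the step you yourself call the heart of the argument: proving that $F$ is vertical for this \emph{arbitrary} $\rho$. Both of your sub-arguments are invalid, and the intermediate claim is in fact false. For the case of two or more punctures: Kawamata subadditivity reads $\overline{\kappa}(\mathbb{A}^{2})\ge \overline{\kappa}(\mathrm{fibre})+\overline{\kappa}(\mathrm{base})$, and the base of any fibration of $\mathbb{A}^{2}$ is $\mathbb{A}^{1}$ or $\mathbb{P}^{1}$ (it is rational and carries no nonconstant units), so $\overline{\kappa}(\mathrm{base})=-\infty$ and the right-hand side is $-\infty$ no matter what the fibre is; no contradiction follows. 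Concretely, $(x,y)\mapsto x(xy+1)(xy+2)$ fibres $\mathbb{A}^{2}$ by curves isomorphic to $\mathbb{A}^{1}$ minus two points, which have log Kodaira dimension $1$. For the case of a single puncture: birational triviality over $C$ holds for \emph{every} $\mathbb{A}^{1}$-fibration, so it can never contradict affineness or a Picard computation. Worse, a counterexample to your claim lives on exactly the kind of surface at hand. Take $V$ with $\Gamma(V)=\mathbb{C}[y,\,xy,\,x^{2}y,\,x^{3}y+\alpha x]$, $\alpha\ne 0$, as in Wright's Theorem \ref{Wright_coord} (with $m=3$, $\alpha_{1}=0$): it is a smooth affine surface containing $U_{0}=\operatorname{Spec}\mathbb{C}[x,y]$ with $F=V\setminus U_{0}=\{x'=0\}\cong\mathbb{A}^{1}$ and $\operatorname{Pic}(V)\cong\mathbb{Z}\langle F\rangle$ --- precisely your intrinsic data. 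The regular function $y'=x^{3}y+\alpha x\in\Gamma(V)$ defines an $\mathbb{A}^{1}$-fibration $V\to\mathbb{A}^{1}$ (in the chart $U_{1}=\operatorname{Spec}\mathbb{C}[x',y']$ it is the coordinate projection, and its fibre over each $c\ne 0$ is the line $\{y'=c\}\cong\mathbb{A}^{1}$) for which $F$ is a \emph{section}, and whose restriction to $U_{0}=\mathbb{A}^{2}$ is the $\mathbb{C}^{*}$-fibration $x^{3}y+\alpha x$. So affineness, $\operatorname{Pic}\cong\mathbb{Z}$, and horizontality of $F$ coexist, and no argument can force verticality of $F$ for an unconstrained $\rho$. (The appeal to Corollary \ref{cor_canonical_divisor} as a ``safeguard'' does not help either: it pins down the class of $K_{X}$, not the position of $F$ relative to $\rho$.)

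This is exactly where the paper makes a different, and essential, choice: instead of an abstract fibration it takes the two \emph{coordinate projections} of $U=\operatorname{Spec}(B)\cong\mathbb{A}^{2}$ and extends them to $\mathbb{A}^{1}$-fibrations $\widetilde{\pi}_{i}\colon X\to C_{i}$ with $C_{i}\cong\mathbb{A}^{1}$ or $\mathbb{P}^{1}$, using \cite[Chap.~I, Lemma~4.3]{Miyanishi_81}. For these fibrations verticality of $F$ is automatic, since their general fibres are the coordinate lines, which are contained in $U$ and closed in $X$, hence disjoint from $F$. The real content is then a pigeonhole argument as in \cite[Lemma 5.2]{Gurjar_Masuda_Miyanishi_Russell_08}: if both $C_{1}$ and $C_{2}$ were $\mathbb{A}^{1}$, then $x$ and $y$ would each be constant along $F$, hence every regular function on the affine surface $X$ would be constant along the curve $F\cong\mathbb{A}^{1}$, which is absurd because regular functions separate points. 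So at least one $\widetilde{\pi}_{i}$ has base $\mathbb{P}^{1}$, its fibre over $\infty$ is $F\cong\mathbb{A}^{1}$ scheme-theoretically by Theorem \ref{thm-Gurjar_Miyanishi}, and the bundle structure follows. If you replace your abstract $\rho$ by these two extended projections, the rest of your reduction does go through.
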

\begin{proof}
	By Proposition \ref{prop_main_proposition} and Proposition \ref{prop-smooth_integral_closure} we have that $\operatorname{Spec}(\overline{A})$ is a smooth affine surface such that $\operatorname{Spec}(\overline{A})\setminus\operatorname{Spec}(B)\cong\mathbb{A}^1$ scheme-theoretically, lets denote this affine line by $F:=\operatorname{Spec}(\overline{A})\setminus\operatorname{Spec}(B)$. On the other hand, $\operatorname{Spec}(B)\cong \mathbb{A}^{2}$ admits two $\mathbb{A}^{1}$-fibrations over $\mathbb{A}^{1}$ given by the standard projections $\pi_{i}: \operatorname{Spec}(B)\to \mathbb{A}^{1}$, $i=1,2$, corresponding to the containments $\mathbb{C}[x]\subset \mathbb{C}[x,y]$ and $\mathbb{C}[y]\subset \mathbb{C}[x,y]$ respectively. It is known \cite[Chap. I, Lemma 4.3 ]{Miyanishi_81} that each $\pi_{i}$ extends to an $\mathbb{A}^{1}$-fibration $\widetilde{\pi}_{i}: \operatorname{Spec}(\overline{A})\to C$ where $C\cong\mathbb{A}^{1}$ or $C\cong\mathbb{P}^{1}$. 
	
	We are going to argue as in \cite[Lemma 5.2]{Gurjar_Masuda_Miyanishi_Russell_08} to show that at least one of the  $\pi_{i}$'s extends to an $\mathbb{A}^{1}$-fibration over $\mathbb{P}^{1}$. Suppose that both fibrations $\pi_{1}$ and $\pi_{2}$ extends to $\mathbb{A}^{1}$-fibrations over $\mathbb{A}^{1}$, that is, $\widetilde{\pi}_{1}: \operatorname{Spec}(\overline{A})\to \operatorname{Spec}(\mathbb{C}[x])$  and $\widetilde{\pi}_{2}: \operatorname{Spec}(\overline{A})\to \operatorname{Spec}(\mathbb{C}[y])$. Then $F$ is contained in some fiber of $\widetilde{\pi}_{1}$ and in some fiber of $\widetilde{\pi}_{2}$. This means that $x$ and $y$ are constant along $F$. Therefore, any function $f\in \Gamma(\operatorname{Spec}(\overline{A}),\mathcal{O}_{\operatorname{Spec}(\overline{A})})\subset \Gamma(\operatorname{Spec}(B),\mathcal{O}_{\operatorname{Spec}(B)})=\mathbb{C}[x,y]$ is also constant along $F$. This leads to a contradiction, because given any two points of $F$ there is a regular function of $\operatorname{Spec}(\overline{A})$ that separates them.
	
	Then at least one of the $\pi_{i}$'s extends to an $\mathbb{A}^{1}$-fibration over $\mathbb{P}^{1}$. Without loss of generality, suppose that it is $\pi_{1}$ that extends to an $\mathbb{A}^{1}$-fibration  $\widetilde{\pi}_{1}: \operatorname{Spec}(\overline{A})\to \mathbb{P}^{1}$. Then $\widetilde{\pi}_{1}^{-1}(\infty)=\operatorname{Spec}(\overline{A})\setminus\operatorname{Spec}(B)\cong\mathbb{A}^1$, scheme-theoretically. Therefore, $\widetilde{\pi}_{1}$ is an $\mathbb{A}^{1}$-bundle over $\mathbb{P}^{1}$ that satisfies the desired properties.
\end{proof}
It is known that every $\mathbb{A}^{1}$-bundle $\pi :V\to \mathbb{P}^{1}$, where $V$ is a affine nonsingular surface, can be realized as the complement $V=\mathbb{F}_{n}\setminus S$ of an ample section $S$, where $\mathbb{F}_{n}=\mathbb{P}\left(\mathcal{O}_{\mathbb{P}^{1}}\oplus \mathcal{O}_{\mathbb{P}^{1}}(-n)\right)$, $n\geq 0$, is a Hirzebruch surface, and the canonical projection $\tilde{\pi}_{n}:\mathbb{F}_{n}\to \mathbb{P}^{1}$ extends $\pi$. In this case, $S= C_{0}+\frac{1}{2}(S^{2}+n)F$, where $C_{0}$ is a section of $\tilde{\pi}_{n}$ with self-intersection $C_{0}^{2}=-n$, $F$ is a fiber of $\tilde{\pi}_{n}$, and $S^{2}\geq n+2$. Moreover, $n$ and $S^{2}$ are uniquely determined by $V$ and $\pi$, see \cite{Gizatullin-Danilov}, \cite[Lemma 5.5.1]{Miyanishi78}, \cite[Theorem 2.3]{Wright97}.
	
Danilov and Gizatullin proved that the isomorphism class of the complement $V=\mathbb{F}_{n}\setminus S$ of an ample section $S$ in a Hirzebruch surface $\mathbb{F}_{n}$ depends only on the self-intersection number $S^{2}$ and neither on $n$ nor on the choice of the section $S$, see \cite[Theorem 5.8.1]{Gizatullin-Danilov}, \cite[Corollary 4.8]{Cassou-Russell07}. These affine surfaces $V$ are called Danilov-Gizatullin surfaces of index $S^{2}$.
	
The Picard group of the Hirzebruch surface $\mathbb{F}_{n}$ is $\operatorname{Pic}(\mathbb{F}_{n})\cong \mathbb{Z}^{2}$, freely generated by the class of $C_{0}$ and the class of a fiber $F$ of $\tilde{\pi}_{n}$, and $C_{0}^{2}=-n$, $F^{2}=0$ and $C_{0}F=1$. The canonical divisor is given by $K_{\mathbb{F}_{n}}=-2C_{0}-(n+2)F=-2S+(S^{2}-2)F$. Therefore, the Picard group of the Danilov-Gizatullin surface $\mathbb{F}_{n}\setminus S$ is freely generated by the class of $F\vert_{\mathbb{F}_{n}\setminus S}$, $\operatorname{Pic}(\mathbb{F}_{n}\setminus S)\cong \mathbb{Z}$, and the canonical divisor is given by $K_{\mathbb{F}_{n}\setminus S}=(S^{2}-2)F\vert_{\mathbb{F}_{n}\setminus S}$.
	
\begin{proposition}\label{prop_V_is_DG}
Let $A=\mathbb{C}[p,q]\subset B=\mathbb{C}[x,y]$ be an \'{e}tale extension such that $\overline{A}\neq B$ and $\overline{A}$ is a primary $\overline{A}$-submodule of $B$, then $\operatorname{Spec}(\overline{A})$ is a Danilov-Gizatullin surface of index 3. 
\end{proposition}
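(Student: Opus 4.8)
The plan is to combine the $\mathbb{A}^{1}$-bundle structure established in Theorem \ref{thm_p1_bundle} with the computation of the canonical class in Corollary \ref{cor_canonical_divisor}, and then to read off the index from a single numerical constraint. First I would invoke Theorem \ref{thm_p1_bundle}, which tells us that $\operatorname{Spec}(\overline{A})$ is a smooth affine surface carrying an $\mathbb{A}^{1}$-bundle structure $\pi:\operatorname{Spec}(\overline{A})\to\mathbb{P}^{1}$. By the realization result recalled immediately before the statement, any such bundle can be written as a complement $\operatorname{Spec}(\overline{A})=\mathbb{F}_{n}\setminus S$ of an ample section $S$ in a Hirzebruch surface, with $S^{2}\geq n+2$, and where $n$ and $S^{2}$ are uniquely determined by the pair $(\operatorname{Spec}(\overline{A}),\pi)$. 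By definition this already exhibits $\operatorname{Spec}(\overline{A})$ as a Danilov--Gizatullin surface of index $S^{2}$, so the whole content of the proposition reduces to showing that $S^{2}=3$.

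Next I would bring in the canonical class. Since $\operatorname{Spec}(\overline{A})$ is smooth by Proposition \ref{prop-smooth_integral_closure}, the hypothesis of Corollary \ref{cor_canonical_divisor} is in force, and that corollary gives that $\operatorname{Pic}(\operatorname{Spec}(\overline{A}))\cong\mathbb{Z}$ is generated by the canonical class $K_{\operatorname{Spec}(\overline{A})}$. On the other hand, the formula recalled above computes this canonical class on the Danilov--Gizatullin surface as $K_{\mathbb{F}_{n}\setminus S}=(S^{2}-2)\,F\vert_{\mathbb{F}_{n}\setminus S}$, where the class $F\vert_{\mathbb{F}_{n}\setminus S}$ is a free generator of $\operatorname{Pic}(\mathbb{F}_{n}\setminus S)\cong\mathbb{Z}$. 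Matching these two descriptions, the canonical class generates the Picard group precisely when the integer $S^{2}-2$ is a unit of $\mathbb{Z}$, i.e. when $|S^{2}-2|=1$.

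Finally, I would close the argument with the inequality $S^{2}\geq n+2\geq 2$ coming from the realization result: this forces $S^{2}-2\geq 0$, so the only solution of $|S^{2}-2|=1$ is $S^{2}-2=1$, that is, $S^{2}=3$. I do not expect a serious obstacle here, since the substantive geometry has already been carried out in the preceding results; the argument is essentially bookkeeping on $\operatorname{Pic}\cong\mathbb{Z}$. The two points requiring care are, first, to confirm that the smoothness hypothesis of Corollary \ref{cor_canonical_divisor} is genuinely available (it is, by Proposition \ref{prop-smooth_integral_closure}), and second, to make sure the canonical class $K_{\operatorname{Spec}(\overline{A})}$ computed abstractly via the ramification formula is correctly identified with the explicit class $(S^{2}-2)\,F\vert_{\mathbb{F}_{n}\setminus S}$ under the isomorphism of Picard groups, so that the divisibility statement really pins down $S^{2}$.
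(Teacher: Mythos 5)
Your proof is correct and follows essentially the same route as the paper: invoke Theorem \ref{thm_p1_bundle}, realize $\operatorname{Spec}(\overline{A})\cong\mathbb{F}_{n}\setminus S$ as a Danilov--Gizatullin surface, and use Corollary \ref{cor_canonical_divisor} to force the canonical class $(S^{2}-2)F\vert_{\mathbb{F}_{n}\setminus S}$ to generate $\operatorname{Pic}\cong\mathbb{Z}$, whence $S^{2}=3$. If anything, your handling of the sign ambiguity ($S^{2}-2=\pm 1$, with $S^{2}=1$ excluded by $S^{2}\geq n+2\geq 2$) is slightly more explicit than the paper's, which writes the equality $(S^{2}-2)F\vert_{\mathbb{F}_{n}\setminus S}=F\vert_{\mathbb{F}_{n}\setminus S}$ directly.
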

\begin{proof}
	It follows from Theorem \ref{thm_p1_bundle} that $\operatorname{Spec}(\overline{A})$ is an $\mathbb{A}^{1}$-bundle over $\mathbb{P}^{1}$. By the observations above it is in particular a Danilov-Gizatullin surface $\operatorname{Spec}(\overline{A})\cong\mathbb{F}_{n}\setminus S$.  On the other hand, we know by Corollary \ref{cor_canonical_divisor} that the Picard group of $\operatorname{Spec}(\overline{A})$ is generated by the canonical divisor $K_{\operatorname{Spec}(\overline{A})}$. It follows that $(S^{2}-2)F\vert_{\mathbb{F}_{n}\setminus S}=F\vert_{\mathbb{F}_{n}\setminus S}$, this forces $S^{2}=3$.
\end{proof}
	
Now, the problem is whether a Danilov-Gizatullin surface (equivalently, an $\mathbb{A}^{1}$-bundle over $\mathbb{P}^{1}$) can factorizes an étale morphism of the complex affine plane. This is precisely the situation Wright conjectured should not happen. \emph{Geometric Formulation of Wright's conjecture}: let $V$ be an affine variety which is an $\mathbb{A}^{1}$-bundle over $\mathbb{P}^{1}$, $U=V\setminus F$, where $F$ is a fiber in $V$. There does not exist $f:V\to \mathbb{A}^{2}$ such that $f\vert_{U}$ is étale, \cite[Conjecture 3.2]{Wright97}.

To address his conjecture, Wright provides an explicit description of the coordinate ring of an affine $\mathbb{A}^{1}$-bundle over $\mathbb{P}^{1}$, that is, he describes $\Gamma(V)$ as a subring of $\mathbb{C}[x,y]$, corresponding to the containment of $U\cong \mathbb{A}^{2}$ in $V$. Let's recall his construction.

\begin{theorem}\cite[Theorem 3.1]{Wright97}\label{Wright_coord}
Let $V$ be an affine variety which is an $\mathbb{A}^{1}$-bundle over $\mathbb{P}^{1}$ with structure map $\pi: V\to \mathbb{P}^{1}$. Pick $x$ such that the function field of $\mathbb{P}^{1}$ is $\mathbb{C}(x)$, and let $U_{0}=\pi^{-1}\left(\operatorname{Spec}\mathbb{C}[x]\right)$, $U_{1}=\pi^{-1}\left(\operatorname{Spec}\mathbb{C}[x^{-1}]\right)$. Then $V=U_{0}\cup U_{1}$ and there exist $y\in\Gamma(V)$ such that $U_{0}=\operatorname{Spec}\mathbb{C}[x,y]$, $U_{1}=\operatorname{Spec}\mathbb{C}[x',y']$ where
\begin{align*}
x'=x^{-1},\quad y'=x^{m}y+\alpha_{1}x^{m-1}+\alpha_{2}x^{m-2}+\cdots+\alpha_{m-1}x,
\end{align*}
where $m\geq 2$ and $\alpha_{1}, \ldots, \alpha_{m-1}\in \mathbb{C}$ not all zero. Moreover
\begin{align*}
\Gamma(V)=\mathbb{C}[t_{0},t_{1},\ldots,t_{m}],
\end{align*}
where
\begin{equation}
\begin{aligned}\label{eq_coord}
t_{0}&=y,\\ 
t_{1}&=xy,\\ 
t_{2}&=x^{2}y+\alpha_{1}x,\\  t_{3}&=x^{3}y+\alpha_{1}x^{2}+\alpha_{2}x,\\ 
\ldots\\ 
t_{m}&=x^{m}y+\alpha_{1}x^{m-1}+\alpha_{2}x^{m-2}+\cdots+\alpha_{m-1}x\;(=y'). 
\end{aligned}
\end{equation}
Conversely, every ring of the form $\mathbb{C}[t_{0},t_{1},\ldots,t_{m}]$ above can be made into the coordinate ring of an $\mathbb{A}^{1}$-bundle over $\mathbb{P}^{1}$.
\end{theorem}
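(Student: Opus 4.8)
The plan is to treat $V$ as a locally trivial $\mathbb{A}^{1}$-bundle, read off its gluing data in the two standard charts of $\mathbb{P}^{1}$, and then realize $\Gamma(V)$ as an explicit intersection inside a Laurent ring. First I would cover $\mathbb{P}^{1}=\operatorname{Spec}\mathbb{C}[x]\cup\operatorname{Spec}\mathbb{C}[x^{-1}]$ and use local triviality of $\pi$ to write $U_{0}=\pi^{-1}(\operatorname{Spec}\mathbb{C}[x])\cong\operatorname{Spec}\mathbb{C}[x,y]$ and $U_{1}=\pi^{-1}(\operatorname{Spec}\mathbb{C}[x^{-1}])\cong\operatorname{Spec}\mathbb{C}[x',y']$. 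On the overlap $\operatorname{Spec}\mathbb{C}[x,x^{-1},y]$ the two fibre coordinates differ by an element of the affine group, i.e.\ $y'=a(x)y+b(x)$ with $a(x)$ a unit $cx^{n}$ in $\mathbb{C}[x,x^{-1}]$ and $b(x)\in\mathbb{C}[x,x^{-1}]$. The admissible changes of trivialisation are $y\mapsto\lambda y+g(x)$ over $U_{0}$ (with $\lambda\in\mathbb{C}^{*}$, $g\in\mathbb{C}[x]$) and $y'\mapsto\mu y'+h(x^{-1})$ over $U_{1}$; these rescale $a$ by a unit, so I may normalise $a=x^{m}$, and they alter $b$ by any element of $x^{m}\mathbb{C}[x]+\mathbb{C}[x^{-1}]$. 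Reducing modulo this subspace kills all monomials of degree $\ge m$ and all of degree $\le 0$, leaving $b$ in the canonical form $\alpha_{1}x^{m-1}+\cdots+\alpha_{m-1}x$ supported in degrees $1,\dots,m-1$.

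The constraints $m\ge2$ and ``$\alpha_{i}$ not all zero'' would then be forced by affineness: if $b=0$ (in particular whenever $m\le1$, where the admissible range of exponents is empty) the bundle is the total space of a line bundle over $\mathbb{P}^{1}$, which contains the complete zero section $\cong\mathbb{P}^{1}$ and hence cannot be affine. Since $V$ is assumed affine, this rules out $b=0$ and forces $m\ge2$.

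Next I would compute the coordinate ring. As $V=U_{0}\cup U_{1}$ with both charts affine, a global function is precisely one regular on each, so $\Gamma(V)=\mathbb{C}[x,y]\cap\mathbb{C}[x^{-1},y']$ inside $\mathbb{C}[x,x^{-1},y]$. The inclusion $\mathbb{C}[t_{0},\dots,t_{m}]\subseteq\Gamma(V)$ is the easy half: each $t_{k}=x^{k}y+\alpha_{1}x^{k-1}+\cdots+\alpha_{k-1}x$ is visibly in $\mathbb{C}[x,y]$, and the identity $t_{k}=x^{k-m}y'-\sum_{i=k}^{m-1}\alpha_{i}x^{k-i}$ (a direct check, valid since $k\le m$) exhibits $t_{k}\in\mathbb{C}[x^{-1},y']$ as well. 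The substantive half, $\Gamma(V)\subseteq\mathbb{C}[t_{0},\dots,t_{m}]$, I would prove by induction on the $y$-degree. Writing $f=\sum_{j=0}^{d}c_{j}(x)y^{j}\in\mathbb{C}[x,y]$ and also $f=\sum_{j}b_{j}(x^{-1})(y')^{j}$, and matching top $y$-terms via $(y')^{j}=x^{mj}y^{j}+\cdots$, gives $c_{d}(x)=b_{d}(x^{-1})\,x^{md}$, so $c_{d}$ is supported in $x$-degrees $0,\dots,md$. Since every monomial $x^{s}y^{d}$ with $0\le s\le md$ is the leading $y$-term of some product $t_{k_{1}}\cdots t_{k_{d}}$ (choose $k_{1},\dots,k_{d}\in\{0,\dots,m\}$ summing to $s$, padding with indices $0$), I can subtract an explicit $\mathbb{C}$-combination of such products to strictly lower the $y$-degree while remaining in the intersection; the base case $d=0$ forces $f\in\mathbb{C}[x]\cap\mathbb{C}[x^{-1}]=\mathbb{C}$.

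Finally, for the converse I would run the construction backwards: given $m\ge2$ and $\alpha_{1},\dots,\alpha_{m-1}$ not all zero, glue $\operatorname{Spec}\mathbb{C}[x,y]$ to $\operatorname{Spec}\mathbb{C}[x',y']$ along $y'=x^{m}y+\alpha_{1}x^{m-1}+\cdots+\alpha_{m-1}x$ to obtain an $\mathbb{A}^{1}$-bundle $V\to\mathbb{P}^{1}$, realise it as the complement of an ample section in a Hirzebruch surface as in the discussion preceding the statement (this is what certifies that $V$ is affine), and recompute $\Gamma(V)=\mathbb{C}[t_{0},\dots,t_{m}]$ by the same intersection argument. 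The hard part throughout is the control of global sections, namely the leading-term reduction that excludes spurious functions from $\Gamma(V)$, coupled with the affineness input used both to pin down $m$ and the $\alpha_{i}$ in the forward direction and to guarantee affineness in the converse.
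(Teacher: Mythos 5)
First, a point of reference: the paper does not prove this statement at all --- it is imported verbatim from Wright \cite[Theorem 3.1]{Wright97} --- so your argument can only be judged on its own merits, not against an internal proof. On those merits, the forward direction of your proposal is essentially sound. Two remarks: (1) ``local triviality'' by itself only gives triviality over \emph{some} open cover of $\mathbb{P}^{1}$; to get $U_{0}\cong\operatorname{Spec}\mathbb{C}[x,y]$ you need that an $\mathbb{A}^{1}$-bundle over $\mathbb{A}^{1}$ is trivial, which follows from $\operatorname{Pic}(\mathbb{A}^{1})=0$ together with $H^{1}(\mathbb{A}^{1},\mathcal{O})=0$ (the structure group is the affine group, an extension of $\mathbb{G}_{m}$ by $\mathbb{G}_{a}$); this is standard but should be said. (2) The normalization of the transition function modulo $x^{m}\mathbb{C}[x]+\mathbb{C}[x^{-1}]$, the complete-zero-section obstruction when $b=0$, the identity $t_{k}=x^{k-m}y'-\sum_{i=k}^{m-1}\alpha_{i}x^{k-i}$, and the leading-coefficient induction computing $\mathbb{C}[x,y]\cap\mathbb{C}[x^{-1},y']$ all check out; the induction is a clean way to get $\Gamma(V)\subseteq\mathbb{C}[t_{0},\dots,t_{m}]$.

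The genuine gap is in the converse, precisely at the affineness claim. You justify that the glued surface $V$ is affine by appealing to ``the discussion preceding the statement,'' i.e.\ to the realization $V=\mathbb{F}_{n}\setminus S$ with $S$ ample. But that discussion (Wright's Theorem 2.3, Danilov--Gizatullin) is a structure theorem for $\mathbb{A}^{1}$-bundles whose total space is \emph{already known} to be affine; invoking it to certify affineness is circular. Note also that your computation $\Gamma(V)=\mathbb{C}[t_{0},\dots,t_{m}]$ cannot substitute for an affineness proof: when all $\alpha_{i}=0$ the same intersection argument gives $\Gamma(V)=\mathbb{C}[y,xy,\dots,x^{m}y]$, yet $V$ (the total space of $\mathcal{O}_{\mathbb{P}^{1}}(-m)$) is not affine and is not $\operatorname{Spec}$ of its ring of global functions, since the zero section gets contracted. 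What is actually needed is an argument such as: compactify $V$ to $\mathbb{P}(\mathcal{E})$, where $0\to\mathcal{O}(-m)\to\mathcal{E}\to\mathcal{O}\to 0$ is the extension whose class is the \v{C}ech cocycle $b(x)=\sum\alpha_{i}x^{m-i}\in H^{1}(\mathbb{P}^{1},\mathcal{O}(-m))$; the normalized monomials $x,\dots,x^{m-1}$ form a basis of this $H^{1}$, so $b\neq 0$ means the extension is non-split; non-splitness forces the Grothendieck splitting $\mathcal{E}\cong\mathcal{O}(-a_{1})\oplus\mathcal{O}(-a_{2})$, $a_{1}+a_{2}=m$, to have $a_{1},a_{2}\geq 1$; then the section at infinity $S$ satisfies $S^{2}=m>0$ and $S\cdot C_{0}=\min(a_{1},a_{2})>0$, hence is ample by Nakai--Moishezon, and $V=\mathbb{P}(\mathcal{E})\setminus S$ is affine. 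With that step supplied (and the chart-triviality remark above), your proof is complete and is, in outline, the natural one.
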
 

In view of this description, Wright's conjecture can be rephrased as follows. \emph{Algebraic formulation of Wright's conjecture}: there does not exist a pair of polynomials $p,q\in \mathbb{C}[t_{0},t_{1},\ldots,t_{m}]\subset \mathbb{C}[x,y]$, where $t_{0}, t_{1}, \ldots, t_{m}$ are as in (\ref{eq_coord}) ($\alpha_{1}, \ldots, \alpha_{m-1}\in \mathbb{C}$ not all zero), with $\frac{\partial(p,q)}{\partial(x,y)}$ non-vanishing (i.e., constant) on $\mathbb{A}^{2}$, \cite[Conjecture 3.2]{Wright97}.

\begin{remark}\label{rmk_number_of_generators}
	The number of generators of the algebra $\Gamma(V)$, that is, the integer $m+1$, is completely determined by the $\mathbb{A}^{1}$-bundle $\pi: V\to \mathbb{P}^{1}$. In fact, let $V=\mathbb{F}_{n}\setminus S$ be the corresponding Danilov-Gizatullin surface, then $m=S^{2}$, see \cite[p. 599, above the Conjecture 3.2]{Wright97} and \cite[Proposition 3.5]{Cassou-Russell07}.
\end{remark}

Wright succeeded in proving the following case of his conjecture.

\begin{theorem}\cite[Theorem 3.3]{Wright97}\label{Wright_thrm_coef}
	Wright's conjecture is true in the case where the coefficient $\alpha_{1}$ is non-zero.
\end{theorem}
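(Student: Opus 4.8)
The plan is to convert the hypothetical Jacobian pair into a statement about orders of vanishing along the fibre at infinity, and then run a leading-term (valuation) calculus in which the hypothesis $\alpha_1\neq 0$ fixes the smallest positive value of the relevant valuation. I work with the two charts of Theorem \ref{Wright_coord}: $U_0=\operatorname{Spec}\mathbb{C}[x,y]$ and $U_1=\operatorname{Spec}\mathbb{C}[x',y']$ glued by $x'=x^{-1}$ and $y'=x^{m}y+\alpha_1x^{m-1}+\cdots+\alpha_{m-1}x$, so that $V=\operatorname{Spec}R$, $U=U_0\cong\mathbb{A}^2$, and $F=\{x'=0\}$. Let $v=\operatorname{ord}_F$ be the divisorial valuation of the fibre at infinity. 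Since $V$ is normal, a pair $p,q\in R$ is precisely a pair of polynomials in $\mathbb{C}[x,y]$ that stay regular along $F$, i.e.\ $v(p),v(q)\geq 0$, and $v(x)=-1$. The role of $\alpha_1\neq0$ enters here: inverting the transition formula gives $y=x'^{m}y'-\alpha_1x'-\cdots-\alpha_{m-1}x'^{m-1}$, whose lowest-order term is $-\alpha_1x'$, so $v(y)=1$ with \emph{constant} leading coefficient $-\alpha_1$. Thus when $\alpha_1\neq 0$ the ring $R$ contains the element $t_0=y$ realizing the minimal positive value of $v$.

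Next I would rewrite the Jacobian condition in the chart at infinity. A direct computation gives $dx\wedge dy=-x'^{\,m-2}\,dx'\wedge dy'$ (consistent with $K_V=(m-2)F$), so the existence of a Jacobian pair is equivalent to the existence of $p,q\in R$ with
\[
\frac{\partial(p,q)}{\partial(x',y')}=-c\,x'^{\,m-2},\qquad c\in\mathbb{C}^{\times}.
\]
That is, the Jacobian computed in the coordinates $(x',y')$ must vanish along $F$ to order \emph{exactly} $m-2$ with leading coefficient a nonzero \emph{constant} in $y'$. Writing $p=\lambda_1(y')\,x'^{\,d_1}+\cdots$ and $q=\lambda_2(y')\,x'^{\,d_2}+\cdots$ with $d_1=v(p)$, $d_2=v(q)$ and $\lambda_1,\lambda_2\in\mathbb{C}[y']\setminus\{0\}$, one computes $\operatorname{ord}_F\!\big(\partial(p,q)/\partial(x',y')\big)\geq d_1+d_2-1$, where the coefficient of $x'^{\,d_1+d_2-1}$ is the Wronskian-type expression $d_1\lambda_1\lambda_2'-d_2\lambda_1'\lambda_2$. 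This already forces $d_1+d_2\leq m-1$.

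This reduces the theorem to a bookkeeping over the admissible leading data $(d_i,\lambda_i)$ of elements of $R=\mathbb{C}[t_0,\dots,t_m]$. In the tight case $d_1+d_2=m-1$ the expression $d_1\lambda_1\lambda_2'-d_2\lambda_1'\lambda_2$ must be a nonzero constant, which is possible only in very restricted situations (a nonvanishing constant Wronskian of two univariate polynomials essentially forces both leading symbols to be affine in $y'$). I would then show that $\alpha_1\neq0$ obstructs exactly this: because $v(y)=1$ with constant symbol, the associated graded ring $\operatorname{gr}_vR=\bigoplus_d R_{\geq d}/R_{\geq d+1}$ is rigid in low degrees — in degree $0$ it is $\mathbb{C}[y']$ via $t_m|_F=y'$, and the element $t_0$ pins down the degree-$1$ piece — so that the symbols $\lambda_i$ realizable at the relevant small orders $d_i$ are too constrained to form an independent affine pair. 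The remaining slack case $d_1+d_2\leq m-2$ is treated by the same calculus: there $d_1\lambda_1\lambda_2'-d_2\lambda_1'\lambda_2$ must vanish identically, forcing a proportionality $\lambda_2^{\,d_1}=c\,\lambda_1^{\,d_2}$ of the symbols, and one propagates the analysis to the next order to contradict the requirement that the Jacobian vanish to order exactly $m-2$ with constant leading term.

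The hard part, and the step I expect to absorb most of the work, is the precise description of which pairs $(d_i,\lambda_i)$ are realized by elements of $R$ and the verification that none of them produces a Jacobian of order exactly $m-2$ with constant leading coefficient — in particular the careful exclusion of the degenerate slack case in which the top-order Wronskian vanishes. It is exactly at this point that $\alpha_1\neq0$ is used essentially, through the identity $v(y)=1$ and the resulting rigidity of the low-degree part of $\operatorname{gr}_vR$; without it one has $v(y)\geq 2$ and this rigidity fails, which is why the complementary case must be attacked by the different, grading-based method developed in the remainder of the paper.
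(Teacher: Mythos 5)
The first thing to note is that the paper contains no proof of this statement: it is imported verbatim from Wright \cite[Theorem 3.3]{Wright97} and used as a black box, its only role being to kill the coefficient $\alpha_{1}$ in the proof of Proposition \ref{prop_description_integral_closure}. So there is no internal argument to measure yours against; your attempt has to stand on its own as a re-proof of Wright's theorem, and it does not yet do so.

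What you have is a correct reduction, not a proof. The correct part: by normality $\Gamma(V)=\{f\in\mathbb{C}[x,y]:\operatorname{ord}_{F}(f)\geq 0\}$; inverting the gluing gives $y=x'^{m}y'-\alpha_{1}x'-\cdots-\alpha_{m-1}x'^{m-1}$, so $\operatorname{ord}_{F}(y)=1$ exactly when $\alpha_{1}\neq 0$; the identity $dx\wedge dy=-x'^{m-2}\,dx'\wedge dy'$ turns a Jacobian pair into a pair $p,q\in\Gamma(V)$ with $\partial(p,q)/\partial(x',y')=-c\,x'^{m-2}$; and the coefficient of $x'^{d_{1}+d_{2}-1}$ in that Jacobian is $d_{1}\lambda_{1}\lambda_{2}'-d_{2}\lambda_{1}'\lambda_{2}$, whence $d_{1}+d_{2}\leq m-1$. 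Everything after that is asserted rather than proved, and the one assertion made concretely is false: it is not true that $d_{1}\lambda_{1}\lambda_{2}'-d_{2}\lambda_{1}'\lambda_{2}=c\neq 0$ ``essentially forces both leading symbols to be affine'' --- take $d_{1}=1$, $d_{2}=2$, $\lambda_{1}=y'$, $\lambda_{2}=y'^{2}+1$, which gives the constant $-2$. So the tight case cannot be dispatched by Wronskian considerations alone; any exclusion must use a description of which pairs $(d_{i},\lambda_{i})$ actually occur as leading data of elements of $R=\mathbb{C}[t_{0},\dots,t_{m}]$, and that description --- the only place where the structure of $R$ and the hypothesis $\alpha_{1}\neq 0$ genuinely enter --- is never given; the ``rigidity of $\operatorname{gr}_{v}R$ in low degrees'' is named but neither formulated nor established. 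Likewise, the slack case $d_{1}+d_{2}\leq m-2$ requires propagating the proportionality $\lambda_{2}^{d_{1}}=c\,\lambda_{1}^{d_{2}}$ through all higher-order coefficients up to order $m-2$, an unbounded descent of exactly the kind that makes Jacobian-type problems hard, and nothing in the sketch shows this process terminates or yields a contradiction. You flag this yourself (``the hard part \ldots\ I expect to absorb most of the work''); that hard part is the entire content of Wright's theorem, so what you have is a plausible opening move, not a proof.
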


Putting together the previous results we obtain the following explicit description of the integral closure of a non-integral étale extension whose integral closure is primary.

\begin{proposition}\label{prop_description_integral_closure}
Let $A=\mathbb{C}[p,q]\subset B=\mathbb{C}[x,y]$ be an \'{e}tale extension such that $\overline{A}\neq B$ and $\overline{A}$ is a primary $\overline{A}$-submodule of $B$, then $\overline{A}=\mathbb{C}[y, xy, x^{2}y, x^{3}y+\alpha x]$, where $\alpha\in \mathbb{C}$, $\alpha\neq 0$. In particular $p,q\in \mathbb{C}[y, xy, x^{2}y, x^{3}y+\alpha x]$.
\end{proposition}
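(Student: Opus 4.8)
The plan is to combine the geometric classification already obtained with Wright's explicit coordinate description and his proven case of his own conjecture; after that, almost everything is bookkeeping, with a single decisive step.

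First I would invoke Theorem \ref{thm_p1_bundle} to realize $\operatorname{Spec}(\overline{A})$ as a smooth $\mathbb{A}^{1}$-bundle $\pi: \operatorname{Spec}(\overline{A})\to \mathbb{P}^{1}$ in which $\operatorname{Spec}(B)\cong\mathbb{A}^{2}$ is the complement of the fiber over infinity and $\pi\vert_{\operatorname{Spec}(B)}$ is a standard projection. Taking this projection (without loss of generality) to be the one associated with $\mathbb{C}[x]\subset\mathbb{C}[x,y]$ identifies the element $x$ of Wright's Theorem \ref{Wright_coord} with our $x$, so that $\mathbb{C}(x)$ is the function field of $\mathbb{P}^{1}$ and $U_{0}=\pi^{-1}(\operatorname{Spec}\mathbb{C}[x])=\operatorname{Spec}(B)$; correspondingly the distinguished $y\in\Gamma(V)$ furnished by that theorem plays the role of our $y$. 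Next I would pin down the number of generators: by Proposition \ref{prop_V_is_DG} the surface is a Danilov--Gizatullin surface of index $S^{2}=3$, and by Remark \ref{rmk_number_of_generators} the integer $m$ in Theorem \ref{Wright_coord} equals $S^{2}$, hence $m=3$. Feeding $m=3$ into Theorem \ref{Wright_coord} would then present $\overline{A}=\mathbb{C}[t_{0},t_{1},t_{2},t_{3}]$ with $t_{0}=y$, $t_{1}=xy$, $t_{2}=x^{2}y+\alpha_{1}x$, $t_{3}=x^{3}y+\alpha_{1}x^{2}+\alpha_{2}x$ and $(\alpha_{1},\alpha_{2})\neq(0,0)$.

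The decisive step is to eliminate $\alpha_{1}$. Since $p,q\in A\subset\overline{A}=\Gamma(V)$ and $A\subset B$ is \'{e}tale, the Jacobian $\frac{\partial(p,q)}{\partial(x,y)}$ is a nonvanishing, hence constant, regular function on $\mathbb{A}^{2}$; thus $(p,q)$ is precisely a pair of the kind forbidden by the algebraic formulation of Wright's conjecture. Because Theorem \ref{Wright_thrm_coef} establishes that conjecture whenever $\alpha_{1}\neq 0$, the very existence of such a pair forces $\alpha_{1}=0$. Combined with $(\alpha_{1},\alpha_{2})\neq(0,0)$ this yields $\alpha_{2}\neq 0$; setting $\alpha:=\alpha_{2}$ and substituting $\alpha_{1}=0$ collapses the generators to $y,\,xy,\,x^{2}y,\,x^{3}y+\alpha x$, giving $\overline{A}=\mathbb{C}[y,xy,x^{2}y,x^{3}y+\alpha x]$ with $\alpha\neq 0$. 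The concluding ``in particular'' is immediate from $p,q\in A\subset\overline{A}$.

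The main obstacle I anticipate is exactly the reduction just described, namely checking that in the coordinate system supplied by Theorem \ref{Wright_coord} the \'{e}tale pair $(p,q)$ literally meets the hypotheses of Wright's already-proven case (Theorem \ref{Wright_thrm_coef})---in particular that the $x,y$ produced by Theorem \ref{Wright_coord} can be aligned with the coordinates of $B$ so that $(p,q)$ really does lie in $\mathbb{C}[t_{0},\ldots,t_{3}]$ with the indicated $\alpha_{i}$. Once this alignment is secured, the identification $m=3$ and the reading-off of the explicit generators are routine.
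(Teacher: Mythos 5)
Your proposal is correct and follows essentially the same route as the paper's own proof: Theorem \ref{thm_p1_bundle} for the $\mathbb{A}^{1}$-bundle structure, Proposition \ref{prop_V_is_DG} with Remark \ref{rmk_number_of_generators} to get $m=3$, Theorem \ref{Wright_coord} for the explicit generators, and Theorem \ref{Wright_thrm_coef} to force $\alpha_{1}=0$. Your added care in spelling out why the étale pair $(p,q)$ falls under the algebraic formulation of Wright's conjecture (constant nonvanishing Jacobian) and in aligning the coordinates of Theorem \ref{Wright_coord} with those of $B$ only makes explicit what the paper leaves implicit.
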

\begin{proof}
By Theorem \ref{thm_p1_bundle} we have that $V=\operatorname{Spec}(\overline{A})$ admits the structure of an  $\mathbb{A}^{1}$-bundle over $\mathbb{P}^{1}$ such that the open embedding of $\operatorname{Spec}(B)\cong \mathbb{A}^{2}$ in $\operatorname{Spec}(\overline{A})$ coincides with the complement of the fiber over the infinity and $\pi\vert_{\operatorname{Spec}(B)}:\mathbb{A}^{2}\to \mathbb{P}^{1}\setminus\{\infty\}=\mathbb{A}^{1}$ is one of the standard projections. Moreover, in Proposition \ref{prop_V_is_DG} we proved that $V$ is a Danilov-Gizatullin surface of index 3, therefore, $m=3$ by Remark \ref{rmk_number_of_generators}. It now follows from Wright's Theorem \ref{Wright_coord} that $p, q \in \Gamma(V)=\mathbb{C}[y, xy, x^{2}y+\alpha_{1}x, x^{3}y+\alpha_{1}x^{2}+\alpha_{2}x]$, where $\alpha_{1}, \alpha_{2}\in \mathbb{C}$ cannot be both zero. By Wright's Theorem \ref{Wright_thrm_coef} we have that $\alpha_{1}=0$.
\end{proof}

At this point we are left with the following case of Wright's conjecture: there does not exist a pair of polynomials $p,q\in \mathbb{C}[y, xy, x^{2}y, x^{3}y+\alpha x]$, $\alpha\in \mathbb{C}$, $\alpha\neq 0$ such that $\frac{\partial(p,q)}{\partial(x,y)}$ is constant. This is precisely the simplest unknown case of Wright's conjecture \cite[pag. 601]{Wright97}. We are going to use Proposition \ref{prop_description_integral_closure} and Wright's observation that $\mathbb{C}[y, xy, x^{2}y, x^{3}y+\alpha x]$ is a graded algebra with $\textrm{deg}\;x=-1$ and $\textrm{deg}\;y=2$ to prove that if it is always the case that $\overline{A}$ is a primary $\overline{A}$-submodule of $B$ then every two-dimensional étale extension is integral.

With respect to this grading the generators of $\mathbb{C}[y, xy, x^{2}y, x^{3}y+\alpha x]$ have degrees: $\textrm{deg}\;y=2$, $\textrm{deg}\;xy=1, \textrm{deg}\;x^2y=0$ and $\textrm{deg}\;(x^3y+\alpha x)=-1$. Let $f(x,y)\in \mathbb{C}[y, xy, x^{2}y, x^{3}y+\alpha x]$ be a homogeneous polynomial of weighted degree $-m<0$. We can write:
\begin{align*}
f(x,y)&=\sum_{2l_{1}+l_{2}-l_{4}=-m}f_{l_{1}l_{2}l_{3}l_{4}}y^{l_{1}}(xy)^{l_{2}}(x^2y)^{l_{3}}(x^3y+\alpha x)^{l_{4}}, \quad \quad l_{i}\in \mathbb{Z}_{\geq 0}, \quad f_{l_{1}l_{2}l_{3}l_{4}}\in \mathbb{C},\\
f(x,y)&=\sum_{2l_{1}+l_{2}-l_{4}=-m}f_{l_{1}l_{2}l_{3}l_{4}}y^{l_{1}}(xy)^{l_{2}}(x^2y)^{l_{3}}(x^3y+\alpha x)^{2l_{1}+l_{2}+m},\\
f(x,y)&=(x^3y+\alpha x)^{m}\sum_{2l_{1}+l_{2}-l_{4}=-m}f_{l_{1}l_{2}l_{3}l_{4}}(x^{6}y^{3}+2\alpha x^{4}y^{2}+\alpha^{2}x^{2}y)^{l_{1}}(x^{4}y^{2}+\alpha x^{2}y)^{l_{2}}(x^2y)^{l_{3}},
\end{align*}
then $f(x,y)=(x^3y+\alpha x)^{m}g(z)$, where $g(z)\in\mathbb{C}[z]$ and $z=x^{2}y$.

For the next definition, let's consider the usual grading, that is, $\textrm{deg}\;x=1$ and $\textrm{deg}\;y=1$. We say a polynomial $p(x,y)\in\mathbb{C}[x,y]$ of total degree $n$ is regular in $x$ if it contains a term in $x^{n}$. It is well known that for any polynomial $p(x,y)$ it is always possible to find an invertible linear transformation $x=av+bw$, $y=cv+dw$, ($a,b,c,d\in \mathbb{C}$, $ad-cd\neq 0$) such that $r(v,w)=p(av+bw,cv+dw)$ is regular in $v$ and $w$, see \cite[(2.7), p. 5]{Lefschetz_1953}.

\begin{lemma}\label{lmm-non_regular_element}
The algebra $\mathbb{C}[y, xy, x^{2}y, x^{3}y+\alpha x]$, $\alpha\in \mathbb{C}$, $\alpha\neq 0$ does not contain polynomials that are regular in both variables.
\end{lemma}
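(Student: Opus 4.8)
The plan is to run everything through the weighted grading $\deg x=-1$, $\deg y=2$ recorded just before the statement, for which the generators $y,\,xy,\,x^{2}y,\,x^{3}y+\alpha x$ are weighted-homogeneous of weights $2,1,0,-1$; hence $R:=\mathbb{C}[y,xy,x^{2}y,x^{3}y+\alpha x]$ is a graded subring of $\mathbb{C}[x,y]$, and every weighted-homogeneous component of an element of $R$ again lies in $R$. I would first reduce the claim: a polynomial that is regular in both variables is in particular regular in $x$, so it suffices to prove that $R$ contains no nonconstant polynomial regular in $x$. (For a nonzero constant the notion is vacuous, and in the intended application $p$ is nonconstant.) So assume $p\in R$ has total degree $n\ge 1$ and contains the monomial $x^{n}$, and write $p=\sum_{w}p_{w}$ for its decomposition into weighted-homogeneous parts, with each $p_{w}\in R$; as each monomial has a well-defined weight, the $p_{w}$ collect disjoint families of monomials of $p$, so no cancellation occurs among them and every monomial of $p_{w}$ is already a monomial of $p$.

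The key step is to identify the component $p_{-n}$. On one hand the monomial $x^{n}$ has weight $-n$, so it occurs in $p_{-n}$ with nonzero coefficient. On the other hand, a monomial $x^{a}y^{b}$ of weight $-n$ satisfies $a-2b=n$, hence $a=n+2b$ and total degree $a+b=n+3b\ge n$, with equality exactly when $b=0$. Since every monomial of $p_{-n}$ is a monomial of $p$, it has total degree at most $n$; together with the bound $\ge n$ this forces all monomials of $p_{-n}$ to have total degree precisely $n$, and the only weight-$(-n)$ monomial of total degree $n$ is $x^{n}$. Therefore $p_{-n}=c\,x^{n}$ with $c\neq 0$.

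It remains to contradict this using the factorization obtained in the paragraph preceding the lemma: any weighted-homogeneous element of weight $-m<0$ equals $(x^{3}y+\alpha x)^{m}g(x^{2}y)$ for some $g\in\mathbb{C}[z]$, $z=x^{2}y$. Applying it to $p_{-n}$ and writing $x^{3}y+\alpha x=x(x^{2}y+\alpha)$ gives $p_{-n}=x^{n}(x^{2}y+\alpha)^{n}g(x^{2}y)$; comparing with $p_{-n}=c\,x^{n}$ and cancelling $x^{n}$ yields the identity $(z+\alpha)^{n}g(z)=c$ in $\mathbb{C}[z]$ with $c\neq 0$. Evaluating at $z=-\alpha$ forces $c=0$, a contradiction. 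Hence $R$ has no nonconstant element regular in $x$, and a fortiori none regular in both variables. I expect the only delicate point to be the middle paragraph, where one must use the absence of cross-cancellation between weighted components to transfer the total-degree bound from $p$ down to $p_{-n}$ and thereby collapse $p_{-n}$ to the single monomial $c\,x^{n}$; once this normal form is in hand, the factorization formula closes the argument at once.
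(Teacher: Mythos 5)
Your proposal is correct and follows essentially the same route as the paper: both exploit the weighted grading $\deg x=-1$, $\deg y=2$, note that regularity in $x$ forces a nonzero weighted-homogeneous component of weight $-n$, and contradict this via the normal form $(x^{3}y+\alpha x)^{n}g(x^{2}y)$ established just before the lemma. Your treatment is in fact slightly more explicit than the paper's, which ends with a direct total-degree comparison, whereas you pin the component down to $c\,x^{n}$ and evaluate at $z=-\alpha$; this fills in the degree bookkeeping the paper leaves implicit.
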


\begin{proof}
Suppose $p(x,y)\in\mathbb{C}[y, xy, x^{2}y, x^{3}y+\alpha x]$ is a polynomial of total (usual) degree $n$ that is regular in both variables. Then in particular $p(x,y)$ contains a term in $x^{n}$ and consequently $p(x,y)$ has a non-zero homogeneous component of weighted degree $-n<0$. On the other hand, we already observe that all homogeneous polynomials of weighted degree $-n<0$ in $\mathbb{C}[y, xy, x^{2}y, x^{3}y+\alpha x]$ are of the form $f(x,y)=(x^3y+\alpha x)^{n}g(z)$, where $g(z)\in\mathbb{C}[z]$ and $z=x^{2}y$. This is a contradiction because $p(x,y)$ is a polynomial of total (usual) degree $n$.  
\end{proof}
	
\begin{theorem}\label{thm-etale_implies_integral}
Suppose that $\overline{A}$ is a primary $\overline{A}$-submodule of $B$ for every étale extension $A=\mathbb{C}[p,q]\subset B=\mathbb{C}[x,y]$. Then every \'{e}tale extension of polynomial rings  $\mathbb{C}[p,q]\subset \mathbb{C}[x,y]$ is integral.
\end{theorem}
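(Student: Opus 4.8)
The plan is to argue by contradiction, and the whole argument hinges on the fact that the hypothesis is quantified over \emph{every} étale extension, so that we may freely replace the given extension by a linearly equivalent one. Suppose some étale extension $A=\mathbb{C}[p,q]\subset B=\mathbb{C}[x,y]$ fails to be integral, i.e. $\overline{A}\neq B$. Since $A\subset B$ is étale with $\dim A=\dim B=2$, the polynomials $p,q$ are algebraically independent, so in particular $p$ is non-constant. The strategy is to pass to an equivalent extension in which $p$ becomes regular in both variables, and then to contradict Lemma~\ref{lmm-non_regular_element}.

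First I would normalize $p$. By the Lefschetz result quoted just before Lemma~\ref{lmm-non_regular_element}, there is a linear automorphism $\sigma$ of $B=\mathbb{C}[x,y]$, say $\sigma(x)=ax+by$, $\sigma(y)=cx+dy$ with $ad-bc\neq 0$, such that $\sigma(p)$ is regular in both variables $x$ and $y$. I would then consider the modified extension $\sigma(A)=\mathbb{C}[\sigma(p),\sigma(q)]\subset B$. Because $\sigma$ is an automorphism of $B$ carrying $A$ onto $\sigma(A)$, the inclusion $\sigma(A)\hookrightarrow B$ is obtained from $A\hookrightarrow B$ by conjugating with $\sigma$, hence is again étale; and its integral closure is $\overline{\sigma(A)}=\sigma(\overline{A})$. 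As $\sigma$ is bijective and $\overline{A}\subsetneq B$, we get $\sigma(\overline{A})\subsetneq B$, so the modified extension is still non-integral.

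The key point is that the standing hypothesis, being quantified over all étale extensions, applies to $\sigma(A)\subset B$ as well: $\overline{\sigma(A)}$ is a primary $\overline{\sigma(A)}$-submodule of $B$. Therefore Proposition~\ref{prop_description_integral_closure} applies to $\sigma(A)\subset B$ and forces $\overline{\sigma(A)}=\mathbb{C}[y,xy,x^{2}y,x^{3}y+\beta x]$ for some $\beta\in\mathbb{C}$, $\beta\neq 0$; in particular $\sigma(p)\in\sigma(A)\subset\overline{\sigma(A)}$ lies in this algebra. By Lemma~\ref{lmm-non_regular_element} this algebra contains no polynomial that is regular in both variables, so $\sigma(p)$ cannot be regular in both variables, contradicting the choice of $\sigma$. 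This contradiction shows $\overline{A}=B$, i.e. every étale extension is integral.

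The only delicate point I anticipate is the coordinate bookkeeping: the normal form in Proposition~\ref{prop_description_integral_closure} is written in the coordinates adapted to the $\mathbb{A}^{1}$-bundle structure on $\operatorname{Spec}(\overline{\sigma(A)})$, and since the distinguished projection may be either standard projection, $x$ and $y$ could be interchanged relative to the original labelling. This causes no difficulty, however, because the property ``regular in both variables'' is symmetric in $x$ and $y$, and Lemma~\ref{lmm-non_regular_element} equally rules out regular-in-both elements of $\mathbb{C}[x,xy,xy^{2},xy^{3}+\beta y]$; thus the contradiction is independent of the orientation of the coordinates. Beyond this minor check, the argument is carried entirely by the preceding structure results, the real content being the recognition that one can use the Lefschetz normalization to make $p$ regular and then re-invoke the quantified hypothesis to land $p$ inside an algebra with no such regular elements.
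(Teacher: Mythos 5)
Your proof is correct and takes essentially the same route as the paper's: a Lefschetz linear change of coordinates makes $p$ regular in both variables, the hypothesis (being quantified over all étale extensions) is re-invoked for the transformed extension, and Proposition~\ref{prop_description_integral_closure} together with Lemma~\ref{lmm-non_regular_element} produces the contradiction. Your closing remark about the possible interchange of the two variables is a sound observation on a point the paper leaves implicit, and it is handled correctly by the symmetry of ``regular in both variables.''
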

\begin{proof}
	Suppose that there exist an \'{e}tale extension $A=\mathbb{C}[p,q]\subset B=\mathbb{C}[x,y]$ that is not integral, that is, $\overline{A}\neq B$ (equivalently the corresponding étale morphism $(p,q):\mathbb{A}^{2}=\operatorname{Spec}(B)\to \mathbb{A}^{2}=\operatorname{Spec}(A)$ is not injective, \cite[Theorem 46]{Wang80}). Choose an invertible linear substitution $x=av+bw$, $y=cv+dw$, ($a,b,c,d\in \mathbb{C}$, $ad-cd\neq 0$) such that $r(v,w)=p(av+bw,cv+dw)$ is regular in both variables $v$ and $w$. After composing the morphism $(p,q)$ with this linear automorphism we obtain another \'{e}tale extension $\mathbb{C}[r,s]\subset \mathbb{C}[v,w]$ that is also non-integral (equivalently an étale morphism $(r,s):\mathbb{A}^{2}=\operatorname{Spec}(\mathbb{C}[v,w])\to \mathbb{A}^{2}=\operatorname{Spec}(\mathbb{C}[r,s])$ that is not injective), where $r(v,w)=p(av+bw,cv+dw)$ and $s(v,w)=q(av+bw,cv+dw)$. By hypothesis we have that $\overline{\mathbb{C}[r,s]}$ is a primary $\overline{\mathbb{C}[r,s]}$-submodule of $\mathbb{C}[v,w]$, applying Proposition \ref{prop_description_integral_closure} to the non-integral étale extension $\mathbb{C}[r,s]\subset \mathbb{C}[v,w]$ we have that $r,s\in \mathbb{C}[w, vw, v^{2}w, v^{3}w+\beta v]$, where $\beta\in \mathbb{C}$, $\beta\neq 0$. By construction the polynomial $r$ is regular in $v$ and $w$, however, by Lemma \ref{lmm-non_regular_element} the algebra $\mathbb{C}[w, vw, v^{2}w, v^{3}w+\beta v]$ does not contain polynomials regular in both variables, a contradiction. Hence $\overline{A}=B$.
\end{proof}

\bibliographystyle{abbrv}
\bibliography{bibliografia}

\end{document}